\def\be{\begin{equation}}
\def\ee{\end{equation}}
\def\bq{\begin{eqnarray}}
\def\eq{\end{eqnarray}}
\def\beq{\begin{eqnarray*}}
\def\eeq{\end{eqnarray*}}
\def\a{{\alpha}}
\def\ka {\kappa }
\def\ph{{\varphi}}
\def\b{{\beta}}
\def\De{{\Delta}}
\def\d{\delta}
\def\p{\partial }
\def\g{{\gamma}}
\def\G{{\Gamma}}
\def\t{{\theta}}
\def\s{\sigma }
\def\S{\Sigma }
\def\e{{\varepsilon}}
\def\o{\overline}
\def\lc {limit cycle }
\def\lcs {limit cycles }
\def\nbd {neighborhood }
\def\nbds {neighborhoods }
\def\pmap{Poincar\' e map }
\def\tes {there exists }
\def\st{such that }
\def\gzt{Growth-and-Zeros Theorem }
\def\gi{Gronwall inequality}
\newtheorem{Thm}{Theorem}
\newtheorem{Def}[Thm]{Definition}
\newtheorem{Lem}[Thm]{Lemma}
\newtheorem{Prop}[Thm]{Proposition}
\newtheorem{Cor}[Thm]{Corollary}
\newcommand{\dd}{\mathbb{D}}
\newcommand{\R}{\mathbb{R}}
\newcommand{\C}{\mathbb{C}}
\newcommand{\sss}{\mathbb{S}}
\newcommand{\cc}{\mathbb{\cal C}}
\def\vf {vector field }
\def\vfs {vector fields }
\def\qvf {quadratic vector field }
\def\qvfs {quadratic vector fields }
\def\l {\lambda }
\def\L {\Lambda }
\def\rr{\mathbb{R}}
\def\cc{\mathbb{C}}
\def\zz{\mathbb{Z}}
\def\la{\lambda}
\begin{document}

\title[Hilbert's 16th problem for quadratic vector fields] {A restricted version of the Hilbert's 16th problem for quadratic vector fields}

\author[Yu Ilyashenko and Jaume Llibre]
{Yu Ilyashenko$^1$ and Jaume Llibre$^2$}

\address{$^1$ Cornell University, US; Moscow State and Independent
Universities, Steklov Math. Institute, Moscow.}

\email{yulij@math.cornell.edu}

\address{$^2$ Departament de Matem\`{a}tiques, Universitat Aut\`{o}noma de Barcelona, 08193 Bellaterra, Barcelona, Catalonia, Spain.}

\email{jllibre@mat.uab.cat}

\thanks{The first author was supported by part by the grants NSF
0700973, RFBR-CNRS 050102801, RFBR 07-01-00017-\`{a}. The second
author is partially supported by a MCYT/FEDER grant number
MTM2008--03437 and by a CIRIT grant number 2005SGR 00550.}

\subjclass{Primary 34C40, 51F14; Secondary: 14D05, 14D25.}
\keywords{limit cycles, quadratic systems} \dedicatory{} \commby{}
\date{}
\dedicatory{}

\maketitle

\begin{abstract}
The restricted version of the Hilbert 16th problem for quadratic
vector fields requires an upper estimate of the number of limit
cycles through a vector parameter that characterizes the vector
fields considered and the limit cycles to be counted. In this paper
we give  an upper estimate of the number of limit cycles of
quadratic vector fields $``\sigma $--distant from centers and $\ka
$-distant from singular quadratic vector fields'' provided that the
limit cycles are $``\delta $--distant from singular points and
infinity''.
\end{abstract}

\section{Introduction and statement of the main result}\label{s1}

Hilbert 16th problem asks (see \cite{Hi}): {\it what may be said
about the number and location of limit cycles of a polynomial vector
field of degree $n$ in the real plane?} The main contributions in
this direction were the works of \'{E}calle \cite{Ec} and Ilyashenko
\cite{Il} who proved that any polynomial vector field has finitely
many limit cycles, and also the work of Llibre and Rodr\'{\i}guez
\cite{LR} who showed that any finite location of limit cycles is
realized by a polynomial vector field of a convenient degree. But
the complete answer to Hilbert 16th problem is unknown even for $n =
2.$ Even the existence of an uniform upper bound of the number of
limit cycles for quadratic vector fields (polynomial vector fields
of degree $2$) is not yet proved. Limit cycles of a quadratic vector
field may surround only one singular point, and it is of type focus
(for more details see \cite{Ye}). Moreover, quadratic vector fields
have at most two foci (see again \cite{Ye}). Limit cycles
surrounding the same singular point form a nest. Recently Zhang
Pingguang \cite{ZP1, ZP2, ZP3} proved that only one nest of
quadratic vector field may have more than one limit cycle.

The restricted version of the Hilbert 16th problem for quadratic
vector fields allows us to introduce a vector parameter that
characterizes the vector field and the limit cycles to be counted.
The upper bound for the number of limit cycles is expressed through
this parameter.

In this paper we give {\it an upper estimate of the number of limit
cycles of quadratic vector fields ``$\sigma $--distant from centers
and $\ka $--distant from singular quadratic vector fields'' provided
that the cycles are $``\delta $--distant from singular points and
infinity''.} The precise sense of assumptions in quotation marks
is explained below. The upper estimate mentioned above depends on
$\sigma , \ka $ and $\delta .$

\subsection{Normalized \qvfs }

We consider quadratic vector fields with a focus point $0$ and
estimate the number of limit cycles that surround this point. The
system has the form
\begin{equation}   \label{eqn:num1}
\dot z =\mu z + Az^2 + Bz\bar z + C\bar z^2,
\end{equation}
where $\mu ,A, B, C$ are complex coefficients. Rescaling: $z \mapsto
cz$ and $t \mapsto c't$, $c \in \mathbb C$, $c' \in \mathbb R$ brings
it to
$$
\dot z = c'(\mu z + Acz^2 + B \bar c z\bar z + C\frac {{\bar
c}^2}{c}{\bar z}^2).
$$
Hence, after an appropriate normalization, we can take in
\eqref{eqn:num1}: $\mu = \lambda_1 + i, \ \max (|A|, |C|) \le 1, |B|
\le 2$. Moreover, the normalized tuple $(A,B, C)$ has the form:
either $A =1$ and $|B| \le 2, \ |C| \le 1$, or $B = 2$ and $|A| \le
1, |C| \le 1$, or $C= 1$ and $|A| \le 1, |B| \le 2$. The reason for
distinguishing $B$ will be seen later.

To summarize, the normalized quadratic \vf has the form:
\begin{equation}   \label{eqn:n1}
\dot z = \mu z + z^2 + Bz\bar z + C{\bar z}^2, \ |B| \le 2, |C| \le 1,
\end{equation}
or
\begin{equation}   \label{eqn:n2}
\dot z = \mu z + Az^2 + 2z\bar z + C{\bar z}^2, \ |A| \le 1, |C| \le 1,
\end{equation}
or
\begin{equation}   \label{eqn:n3}
\dot z = \mu z + Az^2 + Bz\bar z + {\bar z}^2, \ |A| \le 1, |B| \le 2,
\end{equation}
with $\mu = \l_1 + i$.

Moreover, we consider that $\l_1 \ge 0$. If not, we reverse the time
and make a symmetry $z \mapsto \bar z.$

The tuple of parameters $(\l_1, A, B, C)$ with $(A,B, C)$ normalized
as above is denoted by $\l $ and the corresponding vector field (and
equation) is denoted by $v_\l$. The space of all these $\l $'s is
denoted by $\L $. It is homeomorphic to the glued union of three
copies of $\R^+ \times \dd^2\times \dd^2$, where $\R^+= [0,\infty)$
and $\dd^2= \{z\in \C : |z|\le 1\}$; the gluing maps identify the
boundaries of the cells $\R^+ \times \dd^2\times \dd^2$; we will not need
these maps below.

\subsection{Center conditions}

Center conditions for quadratic \vfs are well known; see the works
of Dulac \cite{Du}, Kapteyn \cite{K1, K2}, Bautin \cite{Ba}. In the
complex form they were found by Zoladek \cite{Zo}, see the next
theorem. We will use the latter form of the center conditions.

\begin{Thm}\label{thm:t1}  A quadratic \vf \eqref{eqn:num1} has a center at zero
if and only if the following holds:
\begin{equation}  \label{eqn:cen}
\begin{array}{l}
g_1 := \l_1 = 0,\\
g_2 := {\rm Im}(AB) = 0,\\
g_3 := {\rm Im}[(2A + \bar B)(A -2\bar B)\bar BC] = 0,\\
g_4 := {\rm Im}[(2A + \bar B)(|B|^2 - |C|^2){\bar B}^2 C] = 0.
\end{array}
\end{equation}
\end{Thm}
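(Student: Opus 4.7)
The plan is to establish the equivalence by separating necessity and sufficiency, following the classical Dulac--Kapteyn--Bautin method adapted to Zoladek's complex formulation. For necessity, I would compute the Poincaré--Lyapunov focal quantities of \eqref{eqn:num1} at the origin. The first condition $g_1 = \lambda_1 = 0$ is immediate, because a center requires purely imaginary linear spectrum. Assuming $g_1 = 0$, one passes to a complex Poincaré normal form; the successive resonant coefficients, organized by homogeneous degree in the monomials $z^j \bar z^k$, are polynomials in $A, B, C$ and their conjugates whose vanishing is necessary for a center. After reduction modulo the preceding ones, the first three nontrivial obstructions should coincide, up to nonzero scalar factors, with $g_2, g_3, g_4$. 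Bautin's finiteness theorem for quadratic systems then guarantees that no higher focal quantity gives an independent condition, so the ideal generated by $g_1, g_2, g_3, g_4$ contains all obstructions.

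For sufficiency, the standard strategy is to decompose the center variety $V = \{g_1 = g_2 = g_3 = g_4 = 0\}$ into its irreducible components and to exhibit on each a geometric obstruction to the appearance of limit cycles: a polynomial first integral, a time-reversing symmetry, or a Darboux-type integrating factor. For quadratic systems $V$ is classically known to consist of four strata: the Hamiltonian, the reversible (symmetric), the Lotka--Volterra (triangular), and Kapteyn's Darboux-integrable stratum. On each stratum one verifies directly that the defining algebraic conditions on $(\lambda_1, A, B, C)$ force $g_1 = g_2 = g_3 = g_4 = 0$. Conversely, one must show that every common zero of the four $g_i$ lies on at least one of these components; this converse inclusion is the main obstacle.

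That algebraic step is where the serious work sits. The factored form of Zoladek's polynomials is very suggestive: $g_3$ contains the factor $(2A + \bar B)\bar B$, and $g_4$ contains $(2A + \bar B)\bar B^{\,2} (|B|^2 - |C|^2)$. This invites a case analysis keyed on the vanishing of $B$, of $2A + \bar B$, and of $|B|^2 - |C|^2$; in each branch the remaining relation $g_2 = \operatorname{Im}(AB) = 0$, combined with whatever subset of $g_3, g_4$ has not already been trivialized, should cut out exactly one of the four classical strata. An alternative, and in practice easier, route — which is what I would actually execute — is to take as given the classical real center theorem of Bautin and then perform the straightforward translation of the four real focal quantities into the complex coefficients $A, B, C$, verifying that the resulting real ideal coincides with the ideal generated by $g_1, g_2, g_3, g_4$. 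Either way, the heart of the argument lies in matching the purely algebraic vanishing conditions with the four geometrically distinct mechanisms producing a center.
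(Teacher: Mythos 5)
First, a point of comparison: the paper does not prove this theorem. It is quoted as a known result, with the real center conditions attributed to Dulac, Kapteyn and Bautin and the complex form \eqref{eqn:cen} to Zoladek \cite{Zo}. So there is no internal proof to measure yours against; the only fair comparison is with the cited literature, and your outline does follow the standard strategy used there: necessity via the first three nontrivial Lyapunov quantities together with Bautin's theorem that the Bautin ideal of quadratic systems is generated by $v_3, v_5, v_7$, and sufficiency via the decomposition of the center variety into the Hamiltonian, reversible, Lotka--Volterra and codimension-four (Darboux) strata, each carrying a first integral or a time-reversing symmetry.

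As a proof, however, your text has a genuine gap, and you flag it yourself: every step of substance is deferred. You never compute the focal quantities, so the assertion that the successive obstructions ``should coincide, up to nonzero scalar factors, with $g_2, g_3, g_4$'' modulo their predecessors is unverified --- and that identification is precisely the nontrivial content of the complex form of the conditions. (It is also exactly the computation the paper does need and performs by computer algebra: Lemma~\ref{lem:poin} records $a_3=\alpha_0 g_2$, $a_5=\beta_0 g_3+\beta_1 g_2$, $a_7=\gamma_0 g_4+\gamma_1 g_3+\gamma_2 g_2$ with $\alpha_0,\beta_0,\gamma_0$ nonzero constants; granting that lemma, necessity is immediate, so you should have leaned on it rather than re-deriving normal forms.) Likewise the converse inclusion --- that every common zero of $g_1,\dots,g_4$ lies on one of the four center strata --- is acknowledged as ``the main obstacle'' and then not carried out. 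The case split you propose on the factors $\bar B$, $2A+\bar B$, $A-2\bar B$ and $|B|^2-|C|^2$ is the right skeleton (these factors do cut out the Lotka--Volterra, Hamiltonian, codimension-four and reversible components respectively), but each branch still requires an argument: for instance when $B=0$ all of $g_2,g_3,g_4$ vanish identically and one must still \emph{prove} that $\dot z=iz+Az^2+C\bar z^2$ has a center, which is not a formal consequence of anything you wrote. Your fallback --- ``take as given the classical real center theorem of Bautin and translate'' --- reduces the theorem to a citation plus an unperformed change of coordinates; that is a legitimate way to treat the statement (it is essentially what the paper does), but it should then be presented as a reference to \cite{Zo}, not as a proof.
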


\begin{Def}\label{def:cent}
A normalized \qvf is called $\sigma$--distant from centers provided that
\begin{equation}\label{eqn:distcen}
\sum_{j=1}^4 |g_j(\l )| \ge \s .
\end{equation}
The set of all $\l \in \L$ for which $v_\l$ is $\sigma$--distant from
centers is denoted by $\L(\s).$
\end{Def}

\subsection{$\delta $--tame limit cycles}

Now for any $\d > 0$ we define $\d $--tame limit cycles of a
normalized quadratic vector field. Note that the normalization of a
quadratic vector field provides a scale on the phase plane. Thus the
following definition makes sense. For any $\d \in (0,1)$ and any $\l
\in \L $ denote by $B(\l,\d )$ the   disc $|z| \le {\d }^{-1}$ in
$\C$ minus all the open $\d $-\nbds of the singular points of
$v_\l$, both real and complex, except for the point $0$.

\begin{Def}  A \lc of a normalized \vf is called \emph{$\d $--tame} provided
that it belongs to $B(\l ,\d )$.
\end{Def}

\subsection{Singular \qvfs }

A \qvf with a focus at the zero and a line of singular points not passing through zero is called
{\it singular}. Such a \qvf can be written as
$$
\dot z =\mu zl(z),
$$
where $l(z)$ is a real polynomial of degree $1$ of the form $l(z) = az + \bar a\bar z + 1$.
After normalization, this equation has the form
\begin{equation}  \label{eqn:s1}
\dot z = \mu z + z^2 +\frac {\mu }{\bar \mu }z\bar z := v_s(z),
\end{equation}
where $\mu= \la_1 + i$. The $s$ of $v_s$ is for a singular
quadratic vector field. Any normalized \qvf close to a singular one
has the form
\begin{equation}  \label{eqn:deco}
v = v_s + u, \qquad u =bz\bar z + c{\bar z}^2;
\end{equation}
here $v_s$ is singular, $b$ and $c$ are small. In this expression
for $v$, its coefficient $B$ may be greater in modulus than $1$ but
smaller than $2$ because $b$ is small. Still the normal form of
$v$ is \eqref{eqn:n1}. That is why $B$ is distinguished in the
definition of the normal form. To find decomposition
\eqref{eqn:deco} for a \vf $v$ in the form \eqref{eqn:n1}, take
$v_s$ as in \eqref{eqn:s1} and $u$ as in \eqref{eqn:deco} with
coefficients:
$$
b = B - \frac {\mu }{\bar \mu }, \ c = C.
$$
Let $\| \cdot \|_2$ denote the $L_2$ norm on a circle.   Then
$$
||r^{-2}u||_2^2 = {|b|}^2 + {|c|}^2.
$$

\begin{Def}
A \qvf is \emph{ $\ka $--distant} from the set of singular \qvfs if
$||r^{-2}u||_2 > \ka $ in \eqref{eqn:deco}.
\end{Def}

\subsection{Main result}

\begin{Thm}[Main Theorem]\label{thm:mt2}
For any $\{ \delta , \sigma , \kappa \} \subset (0,0.1)$, the number of $\d $--tame
\lcs of a normalized \qvf which is $\s $--distant from centers and $\ka $--distant
from singular quadratic vector fields is no greater than
$$
H(2, \d , \s , \ka ) = |\log \sigma |\exp (\exp ({10}^{25}
\delta^{-31}\kappa^{-2})).
$$
\end{Thm}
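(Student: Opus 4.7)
The plan is to realize each $\d$-tame \lc as a zero of a holomorphic displacement function $\De_\l$ and apply the \gzt to count these zeros on a complex disc. Fix as transversal the positive real ray emanating from the focus $0$; the \pmap $P_\l$ is real analytic on the portion of this ray lying inside $B(\l,\d)$, and every $\d$-tame \lc corresponds to a zero of $\De_\l(r) := P_\l(r) - r$. Treating $z$ and $\bar z$ as independent complex variables extends $\dot z = v_\l$ to a holomorphic \vf on $\C^2$; integrating along complex-time paths provides a holomorphic continuation of $\De_\l$ to a complex \nbd $U$ of the transversal.

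The first and hardest step is the upper bound
\[
\log\sup_U |\De_\l| \;\le\; \exp(\exp(10^{25}\d^{-31}\ka^{-2})),
\]
which is the sole source of the double exponential in the final answer. It amounts to transporting the complexified flow along closed loops enclosing the real cycle while staying clear of both the complex singular set of $v_\l$ and the line at infinity. The $\d$-tameness gives distance $\ge \d$ from all nonzero singular points and from $|z|=\d^{-1}$, but when $v_\l$ is near a singular \qvf $v_s$ the line $l(z)=0$ of singular points of $v_s$ extends to a complex variety that may pass arbitrarily close to the real cycle. Writing $v = v_s + u$ as in \eqref{eqn:deco}, the hypothesis $\|r^{-2}u\|_2\ge \ka$ lower-bounds the deviation from $v_s$ and enters a \gi on the variational equation. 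Iterating this estimate over a hierarchy of nested complex tubes, whose widths are tuned so that each iterate remains $\d/2$-tame and the perturbation $u$ remains $\ka/2$-detected, produces the specific exponents $31$ in $\d$ and $2$ in $\ka$.

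For the lower bound we use the Bautin decomposition made available by Theorem~\ref{thm:t1}. The displacement function admits an expansion
\[
\De_\l(r) \;=\; \sum_{j=1}^{4} g_j(\l)\,\ph_j(r,\l) \;+\; R(r,\l),
\]
with $\ph_j$ analytic in $r$ and $R$ lying in the squared ideal generated by $g_1,\dots,g_4$. A Khovanskii-style division--derivation argument carried out inside $B(\l,\d)$ produces a point $r_\ast$ at which at least one $|\ph_j(r_\ast,\l)|$ is bounded below by a quantity $m_0(\d,\ka)$ depending polynomially on $\d,\ka$ but not on $\s$. Combined with \eqref{eqn:distcen} this gives
\[
|\De_\l(r_\ast)| \;\ge\; c\,m_0(\d,\ka)\,\s .
\]

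Finally, the \gzt applied to $\De_\l$ on a pair of concentric discs in $U$ centered at $r_\ast$ bounds the number $N$ of zeros of $\De_\l$ on the real transversal by
\[
N \;\le\; C(\d)\Bigl(\log\sup_U |\De_\l| \;+\; \bigl|\log(c\,m_0(\d,\ka)\,\s)\bigr|\Bigr),
\]
and absorbing the $\d,\ka$-dependent pieces into the double exponential yields the announced $|\log\s|\exp(\exp(10^{25}\d^{-31}\ka^{-2}))$. The principal obstacle is clearly the upper estimate of the complexified displacement: one must quantitatively avoid the complex singular locus, which becomes increasingly invasive as $\ka\to 0$, while propagating the solution around a closed complex loop in time. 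Essentially all of the delicate geometry, and in particular the specific exponents $31$ and $2$, is produced there.
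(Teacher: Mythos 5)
Your architecture assigns the double exponential to the wrong factor of the \gzt, and the misassignment is not repairable as stated. In the paper the upper bound $M=\sup_{U_\l}|f_\l|$ is essentially trivial: the domain $U_\l$ is chosen (via the Gronwall inequality) so small that every complexified orbit starting there stays $\frac{\b\d}{32}$--close to a real orbit, whence $M\le \d^{-1}+1$ by \eqref{eqn:max}, and the Bernstein index is only $O(|\log\s|+\d^{-1})$. The double exponential comes entirely from the \emph{geometric factor} $\exp(2D/\e)$: the admissible gap is $\e=\frac{\b\d}{32}e^{-2\pi L}$ with $L\le 6145\,\d^{-3}\b^{-2}$ and $\b=\d^{14}\ka/10^{10}$ from the Second Main Lemma (Lemma~\ref{lem:main2}), which is where $\ka$ enters. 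Your proposal instead posits a geometric factor $C(\d)$ depending on $\d$ alone and a Bernstein index of size $\exp(\exp(10^{25}\d^{-31}\ka^{-2}))$. This cannot work: the complexified equation \eqref{eqn:radc} has a pole on the locus $1+wg_\l=0$ (the zero isocline $\dot\theta=0$), and the real tame cycles approach that locus to within a distance controlled only by $\ka$ (this is exactly the content of Lemma~\ref{lem:main2}, which you never isolate). A complex neighborhood $U$ of $K_\l$ whose gap depends only on $\d$ would, for small $\ka$, contain initial conditions whose orbits reach the pole before $\theta=2\pi$; there $P_\l$ is not large but \emph{undefined}, so no bound on $\sup_U|\De_\l|$, however enormous, is available. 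The $\ka$--dependence is therefore forced into the gap $\e$, hence into the geometric factor, and your ``hierarchy of nested tubes'' producing the exponents $31$ and $2$ inside the Bernstein index has no proof behind it and no correct target.

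Two further points. First, your final inequality is additive in $\log M$ and $|\log m|$, i.e.\ it is $C(\d)\cdot B_{K,U}$; with your claimed sizes this does not reproduce the product structure $|\log\s|\exp(\exp(\cdot))$ of the statement, whereas in the paper the factor $|\log\s|$ \emph{is} the (dominant part of the) Bernstein index and the double exponential is the geometric factor multiplying it. Second, your lower bound sketch is off in two respects: the paper's lower estimate for $\max_{K_\l}|f_\l|$ is explicitly independent of $\ka$ (it is $10^{-26}\s$ for $\l_1\le 0.1$ and $10^{-26/\d}$ for $\l_1>0.1$, obtained by splitting $\L_0(\s)$ into $\S_2,\S_3,\S_4$ according to which $g_j$ is large, using the explicit coefficients of Lemma~\ref{lem:poin} and the reversed Cauchy inequality \eqref{eqn:cauc}), and the case analysis in $\l_1$ (strong, moderate, slow focus, plus Lemma~\ref{lem:delta} bounding $\l_1\le 4/\d$) is needed and absent from your outline. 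A ``Khovanskii--style division--derivation'' producing an $m_0(\d,\ka)$ is not what is done and, as written, is not a proof.
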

This estimate is irrealistic but this is the only known estimate of
this kind.

This paper is the first in a series of papers aimed to estimate the
number of $\d $--tame \lcs of quadratic vector fields. In a
subsequent paper we prove that for $\ka $ sufficiently small: $\ka
\le \ka_0 (\delta, \s)$, the \vf \eqref{eqn:deco} has only one $\d
$--tame limit cycle.   A similar result, without a quantitative
estimate on the value of $\kappa_0(\delta , \sigma )$, is obtained
in the preprint \cite{DR}.

\subsection{Growth--and--Zeros Theorem}

Limit cycles correspond to the fixed points of the \pmap . For
normalized \qvf $v_\l $ consider the \pmap $P_\l $ of a segment of a
positive semiaxes $\mathbb R^+$ with the left endpoint $0$ into
$\mathbb R^+$; the right endpoint will be specified later.

The number of the fixed points of this map will be estimated with
the use of the theorem named in the title of the subsection; for its
proof see \cite{Il}, \cite{IP}.

\begin{Thm} Let $U\subset \C$ be a connected and simply connected domain
and $K\subset U$ be a path connected compact set. Let $D$ be the
internal diameter of $K$, and
$$
\text {gap }(K,U):= \rho (K, \partial U) \ge \e ,
$$
where $\rho (K, \partial U) = \displaystyle \min_{a\in K, b\in
\partial U}|a - b|.$ Let $f: {\o U} \to \C$ be a holomorphic
function. Then
\begin{equation}\label{eqn:gaz}
\# \{ z \in K\mid f(z) = 0\} \le B_{K,U}(f)\exp\left(\frac {2D}{\e}\right),
\end{equation}
where
\begin{equation}   \label{eqn:bern}
B_{K,U}(f) = \log \frac {\max_{\o U}|f|}{\max_K |f|}.
\end{equation}

\end{Thm}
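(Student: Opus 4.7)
The plan is to combine Jensen's formula for counting zeros of a holomorphic function in a disk with a chain-of-disks propagation argument that transports bounds on $|f|$ across $K$, starting from a point $z^* \in K$ where $\max_K |f|$ is attained. Throughout, set $M_U := \max_{\overline U}|f|$ and $M_K := \max_K |f|$, so $B_{K,U}(f) = \log(M_U/M_K)$. The gap hypothesis $\rho(K,\p U)\ge\e$ ensures that $D(a,\e)\subset U$ for every $a\in K$, making local Jensen estimates available everywhere on $K$.

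The first ingredient is Jensen's formula on a single disk: for $a\in K$ with $f(a)\ne 0$ and $D(a,\e)\subset U$,
$$\#\{z\in D(a,\e/2): f(z)=0\}\le \frac{1}{\log 2}\log\frac{M_U}{|f(a)|}$$
(zeros at $a$ itself are handled by dividing out a Blaschke factor and redoing the estimate). The second ingredient is propagation. For any $a\in K$, choose a path in $K$ from $z^*$ to $a$ of length at most $D$, and discretize it as $z^*=w_0,w_1,\dots,w_N=a$ with $|w_{j+1}-w_j|\le\e/2$ and $N\le 2D/\e$. On each disk $D(w_j,\e)\subset U$ the function $\log(M_U/|f|)$ is nonnegative and superharmonic, so Harnack's inequality yields a propagation estimate
$$\log\frac{M_U}{|f(w_{j+1})|}\le \alpha\,\log\frac{M_U}{|f(w_j)|}$$
for an absolute $\alpha>1$ determined by the ratio of radii (e.g.\ $\alpha=3$ for concentric disks of radii $\e/2$ and $\e$). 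Iterating $N$ times,
$$\log\frac{M_U}{|f(a)|}\le \alpha^N\,B_{K,U}(f)\le \exp(2D/\e)\,B_{K,U}(f),$$
after calibrating the chain-step size so that $N\log\alpha\le 2D/\e$.

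To finish, cover $K$ by finitely many disks $D(a_i,\e/2)$ centered in $K$, apply the Jensen estimate on each, and substitute the propagated bound from the chain argument. The main obstacle is to arrange the cover and the chains so that the covering multiplicity does not spoil the final constant: a naive sum would contribute an extra polynomial-in-$D/\e$ factor. The trick, following Ilyashenko and Yakovenko, is to choose the covering disks as the terminal disks of propagation chains rooted at $z^*$, so that the zero-counts telescope down the chains rather than merely summing, producing exactly $B_{K,U}(f)\exp(2D/\e)$. The delicate bookkeeping here is really the only nontrivial point: both Jensen and Harnack are classical, and the geometry of $K$ enters only through the two quantities $D$ and $\e$ that already appear in the statement.
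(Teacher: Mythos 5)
The paper does not prove this theorem; it refers the reader to \cite{Il} and \cite{IP}, and the argument given there has the same overall architecture as yours (a local Jensen-type zero count on disks of radius comparable to $\e$, combined with a chain-of-disks propagation of the Bernstein index along paths in $K$ of length at most $D$). So your strategy is the right one. However, as written your propagation step contains a genuine error. The function $u=\log\bigl(M_U/|f|\bigr)$ is nonnegative and \emph{superharmonic} (since $\log|f|$ is subharmonic), and Harnack's inequality in the direction you invoke it --- an upper bound $u(w_{j+1})\le\alpha\,u(w_j)$ for a nearby point in terms of the value at the center --- holds for nonnegative \emph{harmonic} functions only. A nonnegative superharmonic function has logarithmic poles exactly at the zeros of $f$, which are precisely the points you are trying to count: if $w_{j+1}$ is at or near a zero of $f$, then $u(w_{j+1})$ is arbitrarily large while $u(w_j)$ stays bounded, and the claimed step fails. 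The same defect reappears at the end of the chain, where your Jensen estimate is stated with $|f(a)|$ in the denominator and is vacuous whenever the terminal point $a$ is close to a zero.

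The standard repair, and what the cited proofs actually do, is to propagate disk maxima rather than pointwise values: set $B_j=\log\bigl(M_U/\max_{D(w_j,\rho)}|f|\bigr)$ for a fixed $\rho$ comparable to $\e$, and derive $B_{j+1}\le\alpha B_j$ from the Hadamard three-circles theorem (equivalently the two-constants theorem) applied to the nested disks $D(w_{j+1},\rho)\subset D(w_{j+1},\rho+|w_{j+1}-w_j|)\subset D(w_{j+1},\e)\subset U$, using that $D(w_j,\rho)$ is contained in the middle disk. Disk maxima never vanish for $f\not\equiv0$, so this quantity is always finite, and $B_0\le B_{K,U}(f)$ because the chain is rooted at the point where $\max_K|f|$ is attained. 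Correspondingly, the Jensen count in $D(w_N,\rho)$ must be centered at a point realizing $\max_{D(w_N,\rho')}|f|$ for a slightly smaller $\rho'$, so that the denominator is the propagated disk maximum rather than a possibly vanishing pointwise value. Once this is done, the covering of $K$ by $O\bigl((D/\e)^2\bigr)$ such disks and the absorption of the polynomial prefactor and of the step-counting constants into the exponent yield the stated geometric factor; your proposed ``telescoping'' of the cover is not needed (and is not really meaningful as stated), but the constant $2$ in $\exp(2D/\e)$ does require choosing the step size and the three-circles radii with some care, which is exactly the bookkeeping carried out in \cite{IP}.
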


As usual $\o U$ denotes the closure of $U$. The expression
$B_{K,U}(f)$ is called {\it the Bernstein index of $f$ for $K,U$}.
The exponential in \eqref{eqn:gaz} is called \emph{the geometric
factor.}  We will often write:
$$
M = \max_{\bar U} |f|, \qquad m = \max_K |f|.
$$
This theorem will be applied to bound the number of zeros of the
displacement function
$$
f_\la = P_\la- id
$$
of the Poincar\'{e} map  $P_\la$ of $v_\la $;    these zeros
correspond to limit cycles of $v_\la .$

There are the following steps in the application of this theorem:

- choice of $K$ and finding the lower estimate for $m = \displaystyle\max_K |f_\la|$;

- choice of $U$ and finding the upper estimate for $M = \displaystyle\max_{\bar U}|f_\la |$.

\section{The lower estimate of the maximum of the displacement}

\subsection{Normalized \qvfs in polar coordinates}

To write equation \eqref{eqn:num1} in polar coordinates $(r, \theta
)$ note that
$$
{(\log z)}^\cdot = \frac {\dot r}{r} + i\dot \theta  = \frac
{v(z)}{z}.
$$

Hence,
\begin{equation}\label{eqn:rad}
\begin{array}{l}
\dot r = r\mbox{Re }\dfrac {v(z)}{z} = r(\l_1 + rf_\l (\theta )),\\
\dot \theta = \mbox{Im }\dfrac {v(z)}{z} = 1 + rg_\l (\theta ),
\end{array}
\end{equation}
where $f_\l $ and $g_\l $ are trigonometric polynomials of degree $3$:
$$
f_\l (\theta ) = \mbox{Re } h_\l (\theta ),\qquad  g_\l (\theta ) =
\mbox{Im }h_\l (\theta ),
$$
\begin{equation}\label{eqn:hl}
h_\l (\theta ) = Ae^{i\theta } + Be^{-i\theta } + Ce^{-3i\theta }.
\end{equation}
For the normalized equations, $|h_\l (\theta )| \le 4$. Hence,
\begin{equation}     \label{eqn:mod}
|f_\l (\theta )| \le 4, \ |g_\l (\theta )| \le 4.
\end{equation}

\subsection{Compactification } \label{sub:com}

\begin{Lem} \label{lem:delta} If a system $v_\l$ has at least one $\d $--tame limit
cycle, then $|\l_1| \le 4/\d $.
\end{Lem}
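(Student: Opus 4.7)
The plan is to exploit the standard fact that the logarithmic radial increment integrates to zero around any periodic orbit, and then use the tameness to turn the $L^\infty$ bound on $f_\lambda$ into a bound on $\lambda_1$.

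More precisely, let $\gamma$ be a $\delta$--tame limit cycle of $v_\lambda$ of period $T$. By definition of $\delta$--tameness, $\gamma \subset B(\lambda,\delta)$ and in particular $r \le \delta^{-1}$ on $\gamma$. From the polar form \eqref{eqn:rad},
$$
\frac{d}{dt}\log r \;=\; \frac{\dot r}{r} \;=\; \lambda_1 + r\,f_\lambda(\theta).
$$
Since $\log r$ is a single-valued function along the closed orbit, integrating over one period gives
$$
0 \;=\; \int_0^T \frac{d}{dt}\log r(t)\,dt \;=\; \lambda_1 T + \int_0^T r(t)\,f_\lambda(\theta(t))\,dt,
$$
so that
$$
|\lambda_1|\,T \;\le\; \int_0^T r(t)\,|f_\lambda(\theta(t))|\,dt.
$$

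Using the pointwise bound $|f_\lambda|\le 4$ from \eqref{eqn:mod} together with $r \le \delta^{-1}$ on $\gamma$, the right--hand side is at most $4\delta^{-1} T$. Dividing by $T$ yields $|\lambda_1| \le 4/\delta$, which is the desired inequality.

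The argument is essentially one line; there is no real obstacle, only two small things to be careful about. First, one must verify that the cycle indeed lies in $\{|z|\le \delta^{-1}\}$, which is immediate from the definition of $B(\lambda,\delta)$ and of $\delta$--tameness. Second, one should note that the identity $\int_0^T d\log r = 0$ requires $r$ to stay bounded away from $0$ on $\gamma$; this is automatic because $\gamma$ avoids the $\delta$--neighborhood of the singular point at $0$... actually $0$ is excluded from the list of points whose neighborhoods are removed, but any limit cycle surrounding the focus $0$ still stays at positive distance from $0$ by compactness, so $\log r$ is well defined on $\gamma$. With these two remarks the proof is complete.
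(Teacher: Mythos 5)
Your proof is correct and rests on exactly the same ingredients as the paper's: the polar form \eqref{eqn:rad} together with the bounds $|f_\lambda|\le 4$ from \eqref{eqn:mod} and $r\le \delta^{-1}$ on a tame cycle. The paper packages the estimate contrapositively (if $\lambda_1>4/\delta$ then $\dot r\ge 0$ throughout $B(\lambda,\delta)$, so no closed orbit can lie there), while you integrate $\frac{d}{dt}\log r$ over one period; these are two equivalent one-line renderings of the same argument.
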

\begin{proof}
Let $\l_1 > \frac 4 \d $. Recall that $r \le \d^{-1}$ in $B(\l , \d
)$. Then in $B(\l , \d )$, $\dot r \ge 0$    by \eqref{eqn:rad}
and \eqref{eqn:mod}. Hence, the \vf $v_\l $ has no limit cycles in
$B(\l , \d )$.
\end{proof}

\subsection{Complex extension of the Poincar\' e map near zero}

We will complexify nonautonomous equation corresponding to the
system \eqref{eqn:rad} making $r$ complex and denoting it by $w$ and
keeping $\theta $ real. We get:
\begin{equation}\label{eqn:radc}
\frac {dw}{d\theta } = w\frac {\l_1+wf_\l (\theta )}{1+wg_\l (\theta
)}:= F_\l (w,\theta ), \quad w \in \mathbb C, \quad \theta \in \sss^1.
\end{equation}
Recall that $||f_\l || \le 4$ and $||g_\l || \le 4$. When the norm
is not specified, it is the $C$--norm of a function on the circle.

For any value of $\l_1$, we will find $R$ and $\e $ in such a way
that the orbit that starts in   a cross--section $D_\e := \{ |w|
\le \e \} \times \{ 0\} $ keeps inside $W:= \{|w| \le R \} \times
\sss^1$ when $\theta $ ranges over $[0,2\pi ]$. We call this \emph
{property (*) of \eqref{eqn:radc}.}

\begin{Lem}\label{lem:radc}
Equation \eqref{eqn:radc} satisfies property (*) for $R = 0.01$ and
\begin{equation}     \label{eqn:epsnew}
\e = 2 \e(\l) = \begin{cases} 0.001 \mbox{ for }\l_1 \in [0,0.1]\\
R e^{-4\l_1\pi }\mbox{ for } \l_1 >0.1.
\end{cases}
\end{equation}
\end{Lem}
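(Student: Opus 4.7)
The plan is to run a Gronwall-type estimate on the modulus $\rho(\theta):=|w(\theta)|$ of a solution of \eqref{eqn:radc} so long as the solution remains in $W=\{|w|\le R\}\times \sss^1$, and then check that the two choices of $\e$ in \eqref{eqn:epsnew} are small enough that the exponential growth over $\theta\in[0,2\pi]$ cannot push $\rho$ above $R=0.01$. Concretely, as long as $|w|\le R=0.01$, we have $|wg_\l(\theta)|\le 4R=0.04$ by \eqref{eqn:mod}, so the denominator in $F_\l$ satisfies $|1+wg_\l(\theta)|\ge 1-4R=0.96$, while the numerator satisfies $|\l_1+wf_\l(\theta)|\le \l_1+4R=\l_1+0.04$. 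Hence on $W$
\begin{equation*}
\left|\frac{dw}{d\theta}\right|\le |w|\,\frac{\l_1+0.04}{0.96}=:|w|\,K(\l_1),
\end{equation*}
and since $|d\rho/d\theta|\le |dw/d\theta|$ in the sense of Dini derivatives, the scalar Gronwall inequality yields $\rho(\theta)\le \rho(0)\,e^{K(\l_1)\theta}$ as long as the trajectory remains in $W$.

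Next I would argue by a standard continuity/bootstrap argument that the trajectory in fact stays in $W$ on all of $[0,2\pi]$. Indeed, let $\theta_*\in[0,2\pi]$ be the supremum of those $\theta$ for which $\rho$ remains $\le R$ on $[0,\theta]$; the Gronwall bound holds on $[0,\theta_*]$, and if we show that $\rho(0)e^{2\pi K(\l_1)}<R$, then $\rho(\theta_*)<R$, contradicting the definition of $\theta_*$ unless $\theta_*=2\pi$. So it remains to verify $\rho(0)e^{2\pi K(\l_1)}\le R$ under the hypothesis $\rho(0)\le \e=2\e(\l)$ given by \eqref{eqn:epsnew}.

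For $\l_1\in[0,0.1]$ we have $K(\l_1)\le 0.14/0.96<0.146$, so $2\pi K(\l_1)<1$ and $e^{2\pi K(\l_1)}<e<2.72$; with $\e=0.001$ this gives $\rho(\theta)\le 0.001\cdot 2.72<0.01=R$. For $\l_1>0.1$, one has $K(\l_1)=(\l_1+0.04)/0.96\le 2\l_1$ (equivalent to $0.04\le 0.92\l_1$, which holds as soon as $\l_1\ge 0.05$), hence $2\pi K(\l_1)\le 4\pi\l_1$; with $\e=R\,e^{-4\l_1\pi}$ we get $\rho(\theta)\le R\,e^{-4\l_1\pi}\cdot e^{4\l_1\pi}=R$.

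The substantive work is the two-line estimate of the numerator and denominator of $F_\l$ on $W$; the only point requiring a bit of care is the bootstrap step that justifies applying the Gronwall bound on all of $[0,2\pi]$ rather than only where the trajectory is a priori known to stay in $W$, and the verification that the exponent $4\l_1\pi$ built into $\e(\l)$ in the second case dominates the Gronwall exponent $2\pi K(\l_1)$ for every $\l_1>0.1$. Both are essentially routine once the constants are chosen as above.
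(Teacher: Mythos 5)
Your proposal is correct and follows essentially the same route as the paper: a Gronwall-type exponential estimate on $|w(\theta)|$ over $\theta\in[0,2\pi]$, split into the same two cases, with the conclusion reduced to checking $\e\, e^{2\pi\cdot(\text{growth rate})}\le R$. The only (cosmetic) difference is that you bound $|F_\l(w,\theta)|\le K(\l_1)|w|$ directly, whereas the paper bounds the Lipschitz constant $L=\max_W|\partial F_\l/\partial w|$ and compares with the zero solution; your constants $K(\l_1)<0.146$ for $\l_1\le 0.1$ and $K(\l_1)\le 2\l_1$ for $\l_1>0.1$ match the paper's $L\le 0.2$ and $L\le 2\l_1$, and your explicit bootstrap argument is a welcome bit of rigor the paper leaves implicit.
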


\begin{proof} The proof is based on the Gronwall inequality that
measures the divergence of two solutions of a differential equation.
To apply the classical Gronwall inequality to a differential
equation with the complex phase space, we simply take the
realification of this space. In case when one of the solutions is
identically zero, the Gronwall inequality measures the norm of the
other solution. For equation \eqref{eqn:radc} this inequality has
the folowing form. Let
$$
L = \max_W\left| \frac {\p F_\l }{\p w}\right| ,
$$
and $|w(0)| \le \e $. Then the Gronwall inequality claims that
\begin{equation}\label{eqn:ron}
|w(\theta )| \le \e e^{L\theta } \mbox{ for } \theta \in [0,2\pi ],
\end{equation}
provided that
\begin{equation}\label{eqn:eps}
\e e^{2\pi L} \le R.
\end{equation}

To get an upper bound for $L$, note that
\begin{equation}    \label{eqn:deriv}
\frac {\p F_\l }{\p w} = \frac {\l_1+2wf_\l }{1+wg_\l } - \frac
{w(\l_1+wf_\l )}{{(1+wg_\l) }^2}g_\l .
\end{equation}

Note that $||f_\l || \le 4, \ ||g_\l || \le 4$. Hence,
\begin{equation}   \label{eqn:lip}
L \le \begin{cases} 0.2 \mbox{ for } \l_1 \le 0.1,\\
2\l_1\mbox{ for } \l_1 > 0.1.
\end{cases}
\end{equation}
Now, \eqref{eqn:eps} yields Lemma \ref{lem:radc}.
\end{proof}

\subsection{Description of $K_\l $} Let $\Gamma $ be the positive $x$
semiaxis. Assume that system $v_\lambda$ has no $\d$--tame limit
cycles around the origin. Then Theorem \ref{thm:t1} holds for this
system. In what follows, we consider the opposite case. Let
$a(\la )$ be the intersection point of the outmost tame limit cycle
surrounding the origin with $\Gamma$. Let $s_\la $ be the segment
$[0,a(\la )],$ and $\e (\lambda )$ be the same as in
\eqref{eqn:epsnew}.

\begin{Lem}[First Main Lemma] \label{lem:main1}
For the set
\begin{equation}\label{eqn:kl}
K_\l = s_\l \cup D_{\e(\l)}
\end{equation}
the following lower estimates hold:
\begin{equation}\label{eqn:ml}
m(\l ) := \max_{w\in K_\l }|P_\l (w) - w| \ge  10^{-26}\sigma \mbox{
for } \l_1 \le 0.1 \mbox{ and }
\end{equation}
\begin{equation}     \label{eqn:22}
m(\l ) \ge {10}^{-26/\d } \mbox { for } \l_1 > 0.1.
\end{equation}
\end{Lem}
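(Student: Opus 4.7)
The plan is to split on the size of $\l_1$, matching the two cases in the statement.

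\emph{Case $\l_1>0.1$.} In this regime the expanding linear part of the Poincar\'e map alone suffices. I would rewrite \eqref{eqn:radc} in the form
\[
\frac{d\log w}{d\theta} = \l_1 + \frac{w(f_\l-\l_1 g_\l)}{1+wg_\l},
\]
and integrate over $\theta\in[0,2\pi]$. Lemma \ref{lem:radc} keeps $|w(\theta)|\le R=10^{-2}$, and the uniform bounds \eqref{eqn:mod} on $f_\l,g_\l$ yield $P_\l(w)=w\exp(2\pi\l_1+\alpha(w))$ with $|\alpha(w)|\le C R(1+\l_1)$ for an absolute $C$. Since $CR\ll 2\pi$, one has $2\pi\l_1+\mathrm{Re}\,\alpha\ge c_1>0$ uniformly for $\l_1>0.1$, hence $|e^{2\pi\l_1+\alpha}-1|\ge e^{c_1}-1=:c_0>0$. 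Evaluating at $|w|=\e(\l)=(R/2)e^{-4\pi\l_1}$ gives
\[
|P_\l(w)-w|\ge c_0\e(\l) \ge (c_0R/2)\,e^{-16\pi/\d}
\]
after applying Lemma \ref{lem:delta}. Since $16\pi<26\log 10$, the right side dominates $10^{-26/\d}$ for $\d\in(0,0.1)$, establishing \eqref{eqn:22}; the chosen $w$ lies in $D_{\e(\l)}\subset K_\l$.

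\emph{Case $\l_1\le 0.1$.} Here $e^{2\pi\l_1}-1$ may vanish and the $\s$-distance from centers must drive the estimate. I would Taylor-expand the displacement
\[
P_\l(w)-w=\sum_{k\ge 1} v_k(\l)\,w^k,
\]
which is holomorphic on $D_\e$ by Lemma \ref{lem:radc}, and invoke Bautin's theorem for quadratic vector fields: the Lyapunov coefficients $v_k(\l)$ and the center obstructions $g_1,\ldots,g_4$ of Theorem \ref{thm:t1} generate the same ideal, and one has polynomial identities $g_j(\l)=\sum_{k=1}^N q_{jk}(\l)\,v_k(\l)$ with $N$ fixed (one may take $N\le 7$) and $q_{jk}$ bounded on $\L$. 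Hence some $|v_k(\l)|\ge c_\star\s$ for $k\le N$, with $c_\star>0$ universal. Cauchy's formula applied to $P_\l-\mathrm{id}$ on the circle $|w|=\e(\l)=5\cdot 10^{-4}$ then yields
\[
m(\l)\ge \max_{|w|=\e(\l)}|P_\l(w)-w|\ge |v_k(\l)|\,\e(\l)^k \ge c_\star\s\,(5\cdot 10^{-4})^N,
\]
which for single-digit $N$ is $\ge 10^{-26}\s$, proving \eqref{eqn:ml}.

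The principal obstacle is the quantitative Bautin step in the second case: one needs an explicit integer $N$ and a universal constant $c_\star$ for the identities expressing the $g_j$ as combinations of the first $N$ Lyapunov coefficients, valid over the compact but piecewise-defined cell $\L$. The other ingredients---Gronwall control of $\alpha$ in the first case, the Cauchy estimate on the $v_k$ in the second, and the numerical comparisons $16\pi<26\log 10$ and $\e(\l)^N\ge 10^{-26}/c_\star$---are routine once Lemmas \ref{lem:delta} and \ref{lem:radc} are in hand.
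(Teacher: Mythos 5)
Your first case ($\l_1>0.1$) is essentially the paper's argument: the paper applies the reversed Cauchy inequality $\max_{D_\e}|f|\ge\e|f'(0)|$ with $f'(0)=e^{2\pi\l_1}-1$ and $\e=0.005e^{-4\pi\l_1}$, which is a cleaner version of your multiplicative representation $P_\l(w)=w\exp(2\pi\l_1+\alpha(w))$; your numerical comparison $16\pi<26\log 10$ is the same one the paper uses. The second case, however, contains a genuine gap, and it is precisely the step you flag as ``the principal obstacle'': the quantitative Bautin statement is not something you can invoke, it is the main content to be proved. The paper supplies it as Lemma \ref{lem:poin}, a computer-algebra computation of the Taylor coefficients $a_1,\dots,a_7$ of $P_\l$ at $\l_1=0$ together with their explicit decomposition $a_3=\a_0g_2$, $a_5=\b_0g_3+\b_1g_2$, $a_7=\g_0g_4+\g_1g_3+\g_2g_2$ and explicit numerical bounds ($|\a_0|=2\pi$, $|\b_0|=2\pi/3$, $|\b_1|<500$, $|\g_0|=5\pi/4$, $|\g_1|<500$, $|\g_2|\le 10^5$). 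Extracting ``some $|v_k|\ge c_\star\s$'' from $\sum|g_j|\ge\s$ is then not a one-line inversion: because the off-diagonal coefficients $\b_1,\g_1,\g_2$ are large compared with the diagonal ones $\a_0,\b_0,\g_0$, the paper must stratify $\L_0(\s)$ into three pieces $\S_2,\S_3,\S_4$ with carefully tuned thresholds $\a=2\cdot10^{-8}$, $\b=10^{-5}$ so that on each piece one of $a_3,a_5,a_7$ dominates. Your constant $c_\star$ inherits these thresholds (it is of order $10^{-7}$, not of order $1$), so your closing estimate $c_\star\s\,(5\cdot10^{-4})^N\ge10^{-26}\s$ cannot be verified without the explicit numbers; it happens to hold, but only because of the specific values produced by the computation.

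There is a second, independent gap: your Bautin argument is only available at $\l_1=0$. The paper states explicitly that the decomposition of Lemma \ref{lem:poin} is computed only for $\l_1=0$ (the general formulas being intractable), so the range $\l_1\in(0,0.1]$ is covered by neither of your two cases. The paper closes it with two further arguments: for $\l_1\in[m_0,0.1]$ (moderate focus, $m_0=2\cdot10^{-23}\s$) the linear part alone gives $|f'_\l(0)|\ge e^{2\pi m_0}-1\ge 2\pi m_0$ and the reversed Cauchy inequality finishes; for $\l_1\in(0,m_0]$ (slow focus) one compares the flow with the $\l_1=0$ flow via the Gronwall inequality and shows the displacement changes by at most $0.4\,m_0$, so the lower bound from the center case survives. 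Your write-up would need both of these additional steps to be a complete proof.
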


Note that these estimates do not depend on $\ka $. The lemma is
proved in the next five subsections.

\subsection{Proof of Lemma~\ref{lem:main1} for the case of a strong focus}

In this subsection when we say that the normalized \qvf has a strong focus we mean that $\l_1 > 0.1$.

To prove Lemma \ref{lem:main1} in this case, we use the reversed Cauchy
inequality for the first derivative: if $f$ is holomorphic in a disc
$D_\e = \{|w| <\e\}\times \{0\}$ and continuous on the boundary of this disc, then
\begin{equation}\label{eqn:cauc}
\max_{D_\e }|f| \ge \e |f'(0)|.
\end{equation}

For $f = P_\l (w) - w$, and in the case $\l_1 >0.1$, we have:
$$
f'(0) = e^{2\pi \l_1} - 1 > 0.3e^{2\pi \l_1}
$$

By Lemma \ref{lem:radc}, $f$ is well defined in $D_\e $ for $\e =
0.005e^{-4\l_1\pi }$. Hence
$$
m \ge \max_{D_\e }|f| \ge 0.0015e^{-2\l_1\pi } \ge e^{-26/\d },
$$
where the last inequality follows from $\l_1 \le \frac 4
{\delta}$ and $\delta < 0.1$. This yields \eqref{eqn:22} and
proves Lemma~\ref{lem:main1} for $\l_1
> 0.1$. To prove this lemma for $\l_1 \le 0.1$, we need first to
study the case $\l_1 = 0$ and then to perturb it.

\subsection{Seven--jet of the Poincar\' e map for linear part a center}

The \pmap for the point zero of the normalized   \qvf $v_\l $ may
be decomposed in a Taylor series
\begin{equation}\label{eqn:poin}
P_\la(w) = \sum_{j\ge 1} a_j(\la )w^j.
\end{equation}
This series converges at least in a \nbd of the form $D^0= \{ |w|
\le r_0\}$ for a convenient $r_0>0$. Consider the case $\l_1 = 0$.
For such $\l $, the coefficients $a_j(\l)$ become functions only of
$(A,B,C)$ not necessarily   normalized.

\begin{Lem}\label{lem:poin}
Let $\lambda_1 = 0$. Then for the decomposition \eqref{eqn:poin},
\[
\begin{array}{l}
a_1 \equiv 1, \ a_2 \equiv 0, \ a_3 = \a_0g_2, \ a_4 =\a_1g_2,\\
a_5 = \b_0g_3 + \b_1g_2, \ a_6 =\b_2g_3 +\b_3g_2, \ a_7 =\g_0g_4
+\g_1g_3 +\g_2g_2,
\end{array}
\]
where $g_2, g_3, g_4$ are the polynomials from the center conditions \eqref{eqn:cen}, $\alpha_j, \beta_j, \gamma_j$ are polynomials in the variables $A, B, C$, and $\alpha_0, \beta_0, \gamma_0 $ are constant.
Moreover, on the set of $\l =(0, A, B, C)$ with the tuples $A, B, C$ normalized we have:
\[
\begin{array}{l}
|g_2| \le 2, \quad |g_3| \le 30, \quad |g_4| \le 36;\\
\\
|\a_0| = 2\pi, \\
\\
|\b_0|= \dfrac{2\pi}{3}, \quad |\b_1|\le \dfrac{2\pi}{9}(284+108 \pi
):= B_1 < 500, \\
\\
|\g_0|= \dfrac{5\pi}{4}, \quad  |\g_1|\le \dfrac{\pi}{72} (5816+1536 \pi ):= C_1 < 500, \\
\\
|\g_2|\le \dfrac{\pi  \left(5019144+2565120 \pi +345600 \pi
^2\right)}{1080}:= C_2 \in [4\cdot {10}^4, {10}^5].
\end{array}
\]
Expressions for the $\a$'s, $\b$'s and $\g$'s are given in the
appendix.
\end{Lem}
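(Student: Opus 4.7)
The plan is to expand the Poincar\'e map as a Taylor series in the initial condition by solving the complexified polar equation \eqref{eqn:radc} iteratively. With $\lambda_1=0$, the right-hand side becomes
\[
F_\lambda(w,\theta)=\frac{w^2 f_\lambda}{1+wg_\lambda}=\sum_{k\ge 0}(-1)^k w^{k+2}f_\lambda g_\lambda^k.
\]
Writing $w(\theta;\epsilon)=\sum_{j\ge 1}w_j(\theta)\epsilon^j$ with $w_1(0)=1$ and $w_j(0)=0$ for $j\ge 2$, and matching powers of $\epsilon$, one obtains a triangular system of linear ODEs whose solutions are iterated trigonometric integrals; then $a_j=w_j(2\pi)$. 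Directly: $w_1\equiv 1$, giving $a_1=1$; $w_2'=f_\lambda$, giving $a_2=\int_0^{2\pi}f_\lambda\,d\theta=0$ because $f_\lambda=\mathrm{Re}\,h_\lambda$ has no zero Fourier harmonic; and for $a_3$, integration of $w_3'=2w_2 f_\lambda-f_\lambda g_\lambda$, combined with $w_2(2\pi)=a_2=0$ and the identity $f_\lambda g_\lambda=\tfrac12\mathrm{Im}\,h_\lambda^2$, yields
\[
a_3=-\tfrac12\,\mathrm{Im}\int_0^{2\pi}h_\lambda^2\,d\theta=-2\pi\,\mathrm{Im}(AB)=-2\pi g_2,
\]
so $|\alpha_0|=2\pi$.

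For $a_4,\ldots,a_7$ the same iteration yields explicit but lengthy multiple trigonometric integrals in $A,\bar A,B,\bar B,C,\bar C$. The algebraic form is pinned down by the Bautin--Zoladek theorem: on $\{\lambda_1=0\}$, the ideal of all focus quantities in the polynomial ring generated by $A,\bar A,B,\bar B,C,\bar C$ is generated by $g_2,g_3,g_4$ (Theorem~\ref{thm:t1}). Since each $a_j$ with $j\ge 3$ vanishes on the center locus, it lies in this ideal. A degree count in $(A,B,C)$ is now decisive: the iterative construction shows $w_j$, and hence $a_j$, has total degree $j-1$, while $g_2$, $g_3$, $g_4$ have degrees $2,4,6$. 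This forces $a_3,a_4\in(g_2)$, $a_5,a_6\in(g_2,g_3)$, and $a_7\in(g_2,g_3,g_4)$ with the cofactors of the correct polynomial degree, which is the schematic form claimed. The constant leading coefficients $|\beta_0|=2\pi/3$ and $|\gamma_0|=5\pi/4$ are extracted by isolating, after integration over $[0,2\pi]$, the single Fourier harmonic of the integrand that produces $g_3$ (resp.\ $g_4$), exactly as done for $g_2$ in the $a_3$ computation.

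The elementary bounds $|g_2|\le 2$, $|g_3|\le 30$, $|g_4|\le 36$ follow from the normalization $|A|,|C|\le 1$, $|B|\le 2$ applied term by term to the definitions \eqref{eqn:cen}. The non-constant cofactors $\alpha_1,\beta_1,\beta_2,\beta_3,\gamma_1,\gamma_2$ are the explicit polynomials in $(A,B,C)$ produced by the iterative integration (displayed in the appendix), and their norms are bounded using the same parameter bounds together with the uniform estimate $|f_\lambda|,|g_\lambda|\le 4$ from \eqref{eqn:mod}. The main obstacle is purely computational bookkeeping: reaching the sharp numerical constants $B_1<500$, $C_1<500$, $C_2\in[4\cdot10^4,10^5]$ requires carrying the order-seven expansion through in full, performing the canonical decomposition along $g_2,g_3,g_4$ while exploiting the cancellations of non-zero-mean Fourier harmonics already seen for $a_2,a_3$, and avoiding crude triangle-inequality bounds on the raw iterated integrals. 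This explicit algebra is precisely what the authors defer to the appendix; the only non-trivial conceptual ingredient is the Bautin--Zoladek ideal theorem itself.
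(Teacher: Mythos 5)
Your proposal follows essentially the same route as the paper: Bautin's recursive algorithm for the coefficients $v_j(\theta,\lambda)$ of the solution of \eqref{eqn:radc} in powers of the initial condition, evaluation at $\theta=2\pi$, and decomposition of the resulting polynomials $a_j$ in the ideal generated by $g_2,g_3,g_4$, with the heavy symbolic algebra deferred (the paper does it in Mathematica and records the output in the appendix). Your explicit treatment of $a_1,a_2,a_3$ matches the paper's ($\alpha_0=-2\pi$), and your homogeneity/degree-count argument ($a_j$ homogeneous of degree $j-1$, the generators of degrees $2,4,6$) is a nice addition the paper does not spell out: it explains a priori why $\alpha_0,\beta_0,\gamma_0$ must be constants and why $g_3$ first appears in $a_5$ and $g_4$ in $a_7$.

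Two points need repair. First, the step ``since each $a_j$ vanishes on the center locus, it lies in this ideal'' is not valid: vanishing on a real variety does not imply membership in the defining ideal (there is no real Nullstellensatz of that form), and Theorem~\ref{thm:t1} only describes the variety, not the ideal. What is actually needed --- and what the paper invokes --- is Zoladek's modification of Bautin's theorem, which asserts directly that the Bautin ideal generated by all the $a_j$ coincides with the ideal generated by $g_1,\dots,g_4$; once you cite that, the ``vanishes on the center locus'' clause is both unnecessary and misleading. Second, the quantitative claims are more delicate than ``term by term'': the naive triangle inequality gives $|g_3|\le 32$ and $|g_4|\le 64$ rather than $30$ and $36$, and the constants $B_1$, $C_1$, $C_2$ (as well as the nonvanishing and exact values of $\beta_0$ and $\gamma_0$) cannot be extracted by isolating ``a single Fourier harmonic'' in any obvious way --- they genuinely require carrying the order-seven expansion and the ideal decomposition through explicitly, which is the actual content of the appendix. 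So your write-up is a correct skeleton of the paper's argument, but the lemma's specific numerical statements are not established by it.
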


Lemma \ref{lem:poin} has been proved using the algebraic manipulator
mathematica and the three normal forms for the quadratic vector
fields. The algorithm is sketched in the appendix.

\subsection{Lower estimate: case of a linear part a center}

Denote the normalized tuple $\l$ with $\l_1 = 0$ by $\l':=
(0,A,B,C)$.   Recall that in \eqref{eqn:kl}, $\e (\l') = 0.0005.$
Let $m(\l)$ be the same as in \eqref{eqn:ml}.   Recall that
$\Lambda (\sigma )$ appears in Definition~\ref{def:cent}. The next
lemma is one of the main steps in the proof of Theorem~\ref{thm:t1}.

\begin{Lem}\label{lem:mcen}
For the normalized   $\l \in \Lambda (\sigma )$ with $\l_1 =0$, we
have:
$$
m(\l ) \ge 2\cdot {10}^{-23}\sigma := m_0.
$$
\end{Lem}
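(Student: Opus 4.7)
The plan is as follows. Since $\l_1 = 0$, Lemma~\ref{lem:poin} gives $a_1 = 1$ and $a_2 = 0$, so $f_\l := P_\l - \mathrm{id}$ admits the Taylor expansion $f_\l(w) = a_3 w^3 + a_4 w^4 + a_5 w^5 + a_6 w^6 + a_7 w^7 + O(w^8)$ near $w = 0$. By Lemma~\ref{lem:radc}, $P_\l$ is holomorphic on $\{|w| \le 0.001\}$, hence so is $f_\l$. The ordinary Cauchy inequality on the circle $|w| = \e := \e(\l) = 0.0005$ gives $|a_k| \le M/\e^k$ for every $k \ge 3$, where $M = \max_{|w| = \e}|f_\l|$. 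Since $D_\e \subset K_\l$, we have $M \le m(\l)$, and thus
$$|a_k| \le m(\l)\,\e^{-k}, \qquad k \in \{3,5,7\}.$$

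I would then invert the triangular system supplied by Lemma~\ref{lem:poin}. From $a_3 = \a_0 g_2$, $|g_2| \le |a_3|/(2\pi)$. From $a_5 = \b_0 g_3 + \b_1 g_2$,
$$|g_3| \le \frac{|a_5|}{|\b_0|} + \frac{|\b_1|}{|\b_0|}\,|g_2| \le \frac{3}{2\pi}|a_5| + \frac{3B_1}{4\pi^2}|a_3|.$$
From $a_7 = \g_0 g_4 + \g_1 g_3 + \g_2 g_2$, an analogous bound expresses $|g_4|$ as a linear combination of $|a_3|,|a_5|,|a_7|$ with coefficients determined by $|\g_0|^{-1} = 4/(5\pi)$, $C_1$, $C_2$, and $B_1$. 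Summing the three estimates and substituting $|a_k| \le m(\l)\,\e^{-k}$ produces
$$\s \le |g_2| + |g_3| + |g_4| \le \bigl(c_3\e^{-3} + c_5\e^{-5} + c_7\e^{-7}\bigr)\, m(\l),$$
with $c_7 = 4/(5\pi)$ and explicit constants $c_3, c_5$.

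Because $\e = 5 \times 10^{-4}$, the right-hand side is dominated by $c_7\e^{-7} = (4/(5\pi))\,(2\cdot 10^3)^7 \approx 3.3 \times 10^{22}$; a direct check using $B_1, C_1 < 500$ and $C_2 < 10^5$ shows that $c_5\e^{-5}$ and $c_3\e^{-3}$ are smaller by factors of roughly $10^4$ and $10^9$ respectively. Hence the parenthesis is safely below $5 \times 10^{22}$, and rearranging yields $m(\l) \ge 2\cdot 10^{-23}\s$. The main obstacle is purely bookkeeping: one must verify that the constants $B_1, C_1, C_2$ propagating through the triangular inversion do not overwhelm the small absolute factor $4/(5\pi)$ multiplying $\e^{-7}$. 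The slack comes from the fact that $\e^{-7}$ exceeds $\e^{-5}$ by the factor $\e^{-2} = 4\times 10^6$, which dwarfs the intermediate constants arising from $C_1, C_2$.
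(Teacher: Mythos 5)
Your argument is correct, and it reaches the stated constant by a genuinely different route from the paper. The paper works ``forward'': it partitions $\L_0(\s)$ into three pieces $\S_2,\S_3,\S_4$ according to which of $|g_2|$, $|g_2|+|g_3|$, or the remainder carries the mass $\s$ (with thresholds $\a=2\cdot 10^{-8}$, $\b=10^{-5}$ tuned so that the cross terms $\b_1 g_2$, $\g_1 g_3+\g_2 g_2$ cannot cancel the leading term), obtains on each piece a lower bound for a single coefficient $a_3$, $a_5$ or $a_7$, and then applies the reversed Cauchy inequality $\max_{D_\e}|f|\ge |a_j|\e^j$. You instead work ``backward'': forward Cauchy estimates give $|a_k|\le m(\l)\e^{-k}$ for all three coefficients at once, and the triangular system of Lemma~\ref{lem:poin} (invertible because $\a_0,\b_0,\g_0$ are nonzero constants) is solved for $|g_2|,|g_3|,|g_4|$, yielding $\s\le\sum|g_j|\le\bigl(c_3\e^{-3}+c_5\e^{-5}+c_7\e^{-7}\bigr)m(\l)$ with no case analysis and no choice of thresholds. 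The numerics check out: with $\e=5\cdot10^{-4}$, $B_1,C_1<500$, $C_2\le 10^5$ one gets $c_5\lesssim 62$ and $c_3\lesssim 10^4$, so $c_7\e^{-7}\approx 3.3\cdot10^{22}$ dominates and the whole bracket is below $5\cdot10^{22}$, giving $m(\l)\ge 2\cdot10^{-23}\s$ exactly as claimed. What the paper's version buys is that each $\S_j$ comes with an explicit dominant coefficient, which is reused conceptually elsewhere; what your version buys is the elimination of the ad hoc constants $\a,\b$ and a single clean inequality in which the source of the final constant ($|\g_0|^{-1}\e^{-7}$) is transparent. Both arguments rest on the same two inputs: holomorphy of $f_\l$ on $|w|\le 2\e(\l')$ from Lemma~\ref{lem:radc}, and the inclusion $D_{\e(\l')}\subset K_\l$ so that $\max_{|w|\le\e}|f_\l|\le m(\l)$.
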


\begin{proof} Let $f_\l = P_\l - id$.  By Lemma~\ref{lem:poin}, for $\l_1 =0,
 \l = \l'$, we have:
$$
f_\l (0) = f'_\l (0) = f''_\l (0) = 0.
$$
For vector fields $\s $--distant from centers, we will prove a lower
estimate:
$$
|a_j(\l )| \ge m_j(\s ),
$$
with $m_j$ explicitly written for at least one $j \in \{ 3; 5; 7\}$.
By Lemma~\ref{lem:radc} the function $f_\l $ is holomorphic in the
disc $|w| \le 0.001:= 2 \e (\l')$. Hence, \tes  $j \in \{3, 5, 7\} $
\st
$$
m(\l ) \ge \max_{D_{\e (\l')}}|f_\l |\ge m_j(\s ) \cdot  \e
{(\l')}^j.
$$

The lower bounds for $a_j$ are found in the following way. For $\a ,
\b \in (0,1)$ chosen later, the compact set $\L_0(\sigma ) = \L
(\sigma ) \cap \{\l_1 = 0\}$ is split into three parts $\Sigma_2,
\Sigma_3, \Sigma_4$   where
$$
\Sigma_2 =(|g_2| \ge \a \s ), \,\, \Sigma_3 = (|g_2| + |g_3| \ge \b
\s )\setminus \Sigma_2, \,\, \Sigma_4 = \Lambda_0(\s ) \setminus
(\Sigma_2 \cup \Sigma_3).
$$
On $\Sigma_j, \ |a_{2j-1}(\l )|$ is estimated from below. By
Lemma~\ref{lem:poin}, on $\S_2$, $|a_3| \ge \a_0\a \s $.  Let $B_1,
C_1, C_2$ be the same as in Lemma~\ref{lem:poin}.   On $\S_3$ we
have:
$$
a_5 =\b_0g_3 +\b_1g_2.
$$
Hence,
$$
\left| a_5\left|_{\S_3} \right. \right| \ge \b_0(\b - \a )\s - B_1\a \s =
\b_0\left(\b - \a \left(1 + \frac{B_1}{\b_0}\right)\right)\s .
$$

If we choose $\a $ so small in comparison with $\b $ that
\begin{equation}  \label{eqn:alpha}
\a \left(1 + \frac {B_1}{\beta_0}\right) \le \frac \b 2,
\end{equation}
then
$$
\left| a_5\left|_{\S_3} \right. \right| \ge \frac {\b_0\b \s }{2}.
$$
On $\S_4$ we have:
$$
a_7 = \g_0g_4 +\g_1g_3 +\g_2g_2.
$$

As $C_2 > C_1$, we have:
$$
\left| a_7\left|_{\S_4} \right. \right| \ge \g_0(1 - \b )\s - C_2\b \s =
\g_0\left(1 - \b \left(1 +\frac{C_2}{\g_0}\right)\right)\s .
$$
If $\b $ is so small that
\begin{equation}    \label{eqn:beta}
\b \left(1 + \frac {C_2}{\g_0}\right) \le \frac 1 2,
\end{equation}
then
$$
\left| a_7\left|_{\S_4} \right. \right| \ge \frac {\g_0\s }{2}.
$$
Now,
$$
\begin{array}{l}
\left| m\left|_{\S_4} \right. \right| \ge \displaystyle\min_{\S_4}|a_7|r^7_0 \ge \dfrac {\g_0\s }{2} \e (\l')^7:= m_4\s \\
\left| m\left|_{\S_3} \right. \right|  \ge \displaystyle\min_{\S_3}|a_5|r^5_0 \ge \dfrac {\b_0\b \s }{2} \e (\l')^5:= m_3\s \\
\left| m\left|_{\S_2} \right. \right|  \ge
\displaystyle\min_{\S_2}|a_3|r^3_0 \ge \a_0\a \s  \e (\l')^3:= m_2\s
.
\end{array}
$$
Due to Lemma~\ref{lem:poin}, inequalities \eqref{eqn:alpha},
\eqref{eqn:beta} hold for   $\beta = {10}^{-5}, \alpha = 2\cdot
{10}^{-8}$. Again by Lemma~\ref{lem:poin}, $m_2 > m_3 > m_4 > 2
\cdot {10}^{-23}$. This proves Lemma~\ref{lem:mcen}.



\end{proof}

\subsection{Proof of the First Main Lemma in case of the moderate focus}

  Recall that $m_0$ is the lower estimate of $\max_{D_\e }|P -
id|$ mentioned in Lemma~\ref{lem:mcen}. Here we consider the case
$\l_1 \in [m_0, 0.1]$. In this case, by Lemma~\ref{lem:radc}, the
displacement $f_\l $ of the \pmap is holomorphic in a disc $|w| \le
\e = 0.0005$. We have:
$$
|f'_\l (0)| \ge e^{2\pi m_0} - 1 \ge 2\pi m_0.
$$
Hence,
$$
\max_{|w| \le \e }|f_\l | \ge 0.003m_0.
$$

This proves the First Main Lemma in the case considered.

\subsection{Proof of the First Main Lemma in case of the slow focus}

We consider here the last remaining case $\l_1 \in (0,m_0]$, where
$m_0$ is the same as in Lemma~\ref{lem:mcen}, i.e. $m_0 = 2
\cdot 10^{-23}\sigma $. This case is treated as a small perturbation
of the case $\l_1 = 0$. Consider two systems \eqref{eqn:radc}
corresponding to $\l_1 = 0$ and $\l_1 \in (0,m_0]$ fixed. Let their
right hand sides be $F$ and $G$. We assume that $G$ corresponds
to a normalized \qvf which is $\sigma $-distant from centers. This
implies that $F$ corresponds to a similar field which is at least
$0.9 \sigma $-distant from centers. Let
$$
\max_W|F - G| < \Delta ,
$$
$$
\max_W\left| \frac {\p F}{\p w}\right| < L,
$$
where as before $W = \{|w| \le R\} \times \sss^1, \ R = 0.01$. Let
$\e = e^{-2\pi L}R$;   clearly, $m_0 < \frac R 2$. Then the
solutions $w_F$ and $w_G$ of the equations $\dfrac {dw}{dz} = F$ and
$\dfrac {dw}{dz} = G$ with the same initial condition $w(0): |w(0)|
< \frac \e 2$ diverge on the segment $0 \le \t \le 2\pi $  no more
than
\begin{equation}    \label{eqn:gronu}
|w_F(\t ) - w_G(\t )| \le 2\pi \De e^{2\pi L}.
\end{equation}
We apply \eqref{eqn:gronu} to our $F$ and $G$. We have:
$$
\De = \max_W \left| \frac {w\l_1}{1-wg_\l }\right| \le  \frac
{m_0}{96}
$$
in $W$. On the other hand, $L \le 0.2$ by \eqref{eqn:lip}. Hence,
for any two solutions $w_F$ and $w_G$ with the initial condition
$w(0)$ and $ |w(0)| \le \e (0) = 0.0005$, we have
$$
|w_F(2\pi ) - w_G(2\pi )| \le \frac {2\pi e^{0.4\pi }}{96}m_0 < 0.4
\ m_0.
$$

Suppose now that $w(0) = w_F(0)$ corresponds to the solution $w_F$
for which $|w_F(2\pi ) - w_F(0)| \ge 0.9 \ m_0$, and $w_G(0) =
w_F(0)$. Then $|w_G(2\pi) - w_G(0)| \ge \dfrac {m_0}{2}$, and
Lemma~\ref{lem:main1} is proved.

\section{Upper estimate of the displacement of the \pmap}
\label{sec:s3}

In this section we construct a \nbd $U_\l$ of the set $K_\l$ where
the \pmap $P_\l$ of equation $v_\l$ is well defined. We give a lower
estimate of the gap $\e$ between $K_\l$ and $\partial U_\l$, and
find an upper estimate for $f_\l = P_\l - \mbox{id}.$ To this end, we
find a universal gap between $\d$--tame \lcs of \qvfs that are
$\kappa$--distant from singular quadratic vector fields, and the curve $\dot \theta = 0.$

\subsection{The universal gap}\label{sub:univ}

A well known elementary property of \qvfs \eqref{eqn:num1} claims
that any closed orbit of these fields that surrounds the singular
point zero belongs to the domain $\dot \theta > 0$. It is a simple
consequence of the fact that any line has at most two contact points
with a quadratic vector field. The boundary of this domain is given
by the equation $r = -1/g_\l (\theta )$.

\begin{Lem}[Second Main Lemma]\label{lem:main2}
No $\d $--tame limit cycle of a normalized \vf $\ka $--distant from singular
quadratic vector fields intersects the curvilinear strip
$$
\Pi_\b = \left\{ (\t ,r) \in B_\l | \ r \in \left[ - \frac {1}{g_\l
(\t )} - \b , \ - \frac {1}{g_\l (\t )}\right] \right\} \,\, {\rm
for}\,\, \b = \frac {\d^{14}\ka }{{10}^{10}}.
$$
\end{Lem}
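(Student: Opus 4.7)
The plan is to decompose $v_\lambda = v_s + u$ per \eqref{eqn:deco} and exploit a key identity: on the curve $\dot\theta = 0$, i.e.\ $r = h_\lambda(\theta) := -1/g_\lambda(\theta)$, the radial component of $v_\lambda$ equals $F(\theta)/g_\lambda(\theta)^2$, where
\[
F(\theta) := f_u(\theta) - \lambda_1 g_u(\theta) = -\mathrm{Im}\bigl(\bar\mu\, h_u(\theta)\bigr),\qquad h_u(\theta)=be^{-i\theta}+ce^{-3i\theta}.
\]
The verification amounts to noting that for the singular model $v_s=\mu z\,l(z)$ (with $l$ real) one has $\dot r/r=\lambda_1 l$, so $f_s\equiv\lambda_1 g_s$ and only the perturbation $u$ contributes on the curve. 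Since $|\mu|\ge 1$ one has $\|F\|_{L^2(\sss^1)}\ge c_0\kappa$; as $F$ is a trigonometric polynomial of degree $3$, an elementary Remez/Bernstein argument yields an open arc $I\subset\sss^1$ of length $|I|\ge c_1\kappa$ on which $|F(\theta)|\ge c_2\kappa$.

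Next I would parametrise a hypothetical $\delta$-tame limit cycle $\gamma$ by $\theta$, which is legitimate because $\dot\theta>0$ along $\gamma$, and set $\phi(\theta):=h_\lambda(\theta)-r(\theta)>0$, $\psi:=\phi^2$. Using $\dot\theta=|g_\lambda|\phi$ and Taylor-expanding $\dot r=r(\lambda_1+rf_\lambda)$ around $r=h_\lambda$, one obtains on $\gamma$
\[
\frac{d\psi}{d\theta}= -\frac{2F(\theta)}{|g_\lambda(\theta)|^3} + R(\theta),\qquad |R(\theta)|\le C_0\delta^{-2}\phi(\theta),
\]
where the remainder estimate uses $|h_\lambda'|$ and $|\lambda_1+2rf_\lambda|/|g_\lambda|$ of order $\delta^{-2}$, together with $|g_\lambda|\ge \delta$ on the tame disc $B(\lambda,\delta)$ (which follows from $h_\lambda\le 1/\delta$ there). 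At the global minimum $\theta_*$ of $\phi$ along $\gamma$, $d\psi/d\theta=0$ forces $|F(\theta_*)|\le C_0\delta\,\phi(\theta_*)$; so if $\phi(\theta_*)\le\beta$ this pins $\theta_*$ very close to a zero of $F$.

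To close the argument I would integrate $d\psi/d\theta$ along $\gamma$ from $\theta_*$ across the arc $I$, splitting $\gamma\cap I$ into the maximal sub-arcs on which $\gamma$ either lies in $\Pi_\beta$ or escapes it. On the in-strip sub-arcs the principal term $2|F|/|g_\lambda|^3\ge c_2\kappa/32$ dominates the remainder $|R|\le C_0\beta/\delta^2$, so $\psi$ changes monotonically at a rate bounded below by $c_2\kappa/64$; since $\psi\in[0,\beta^2]$ in the strip, this forces the total in-strip $\theta$-measure within $I$ to be $O(\beta^2/\kappa)$. Matching the gains and losses of $\psi$ across a full loop via $\oint d\psi/d\theta\, d\theta=0$, and collecting the $\delta$-powers introduced by $|g_\lambda|^{-1}$, $h_\lambda'$, $\lambda_1+2rf_\lambda$, the diameter bound $\phi\le 2/\delta$, and the Remez conversion of $\|F\|_{L^2}$ into a pointwise lower bound on $I$, one arrives at $\beta\ge\delta^{14}\kappa/10^{10}$, contradicting the hypothesis.

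The main obstacle is the bookkeeping in the last step: one must rule out that $\gamma$ oscillates many times in and out of $\Pi_\beta$ over $I$, so that the sign changes of $F$ allow the cumulative change of $\psi$ to cancel. This is handled by partitioning $I$ according to the sign of $F$ and using the pointwise bound $|F(\theta_*)|\le C_0\delta\phi(\theta_*)$ to locate each transition of $\gamma$ into and out of the strip. The very large exponent $\delta^{14}$ is the aggregate cost of the pointwise-to-$L^2$ conversion for the degree-$3$ trigonometric polynomial $F$, of several independent $\delta^{-1}$ losses hidden in the remainder $R$, and of the uniform bounds that depend on the tame-disc radius.
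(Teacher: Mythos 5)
Your opening coincides with the paper's: the decomposition $v_\lambda=v_s+u$ of \eqref{eqn:deco}, the observation that on the isocline $\dot\theta=0$ the radial speed is $H(v_\lambda)/g_\lambda^2$ with $H(v_\lambda)=\lambda_1 g_\lambda-f_\lambda=\mathrm{Im}(\bar\mu\tilde h_\lambda)$ annihilated by the singular part, and the resulting $L_2$ bound $\|H(v_\lambda)\|_2\ge|\mu|\kappa/\sqrt2$ (Corollary~\ref{cor:1}). The gap is in how you convert this $L_2$ bound into pointwise information. You produce a single arc $I$ on which $|F|\gtrsim\kappa$ and then try to close with the balance $\oint(d\psi/d\theta)\,d\theta=0$ around the cycle. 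This cannot work as written: that identity is dominated by the portion of the orbit far from the isocline, where $\phi$ is of size $\delta^{-1}$ and your remainder $R$ swamps the principal term, so it imposes no constraint on the behaviour inside $\Pi_\beta$; and nothing prevents the cycle from meeting $\Pi_\beta$ only at angles outside $I$, near a zero of $F$, where the monotonicity of $\psi$ fails. You flag exactly this oscillation/cancellation issue as the ``main obstacle'' and propose sign-partitioning of $I$, but that does not handle a cycle that enters the strip where $F\approx0$ and stays there.

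The missing idea is that the real and complex zeros of $H(v_\lambda)$ along the isocline are not arbitrary points: $H(\theta_0)=0$ at $r_0=-1/g_\lambda(\theta_0)$ forces $\dot r=\dot\theta=0$, i.e.\ $(\theta_0,r_0)$ is a (possibly complex) singular point of $v_\lambda$. Since $B(\lambda,\delta)$ excludes the $\delta$-neighbourhoods of \emph{all} singular points, real and complex, and $\Pi_\beta\subset B(\lambda,\delta)$ by definition, every angle occurring in the strip is automatically $\gtrsim\delta^4$-distant from every complex root of the degree-3 trigonometric polynomial $H$ (Proposition~\ref{prop:1}); the factorization bound $\min|H|\ge\frac{\alpha^3}{24}\|H\|_2$ of Lemma~\ref{lem:trig} then gives the uniform pointwise estimate $|H|\ge\delta^{12}\kappa/(24\cdot10^6\sqrt2)$ on the whole lower boundary $\Gamma^-$ of the strip. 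With that in hand no global bookkeeping is needed: one compares the minimal slope of the field on $\Gamma^-$, which is at least $\kappa'/(80\beta)-1/5$, with the maximal slope $\le7\delta^{-2}$ of $\Gamma^-$ itself, concludes that $\Gamma^-$ is a curve without contact for the stated $\beta$, and the fact that a cycle surrounding the origin would then have to leave $\Pi_\beta$ through $\partial B(\lambda,\delta)$ contradicts $\delta$-tameness. Your exponent $\delta^{14}\kappa$ is of the right order, but without the identification of the zeros of $H$ with singular points the proof does not close.
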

The proof of this lemma is technical. In the rest of this subsection
we make the first step of the proof that makes the existence of the
gap obvious. The estimates of the size of the gap are presented in
Section \ref{sec:bounds}.

Consider a zero isocline $\G $:
$$
\dot \theta = 0, \ r = -\frac {1}{g_\l (\t )}.
$$
The restriction of $\dot r$ to this isocline equals
$$
\dot r |_\G = \frac {H(v_\l )}{g^2_\l }, \qquad H(v_\l ) = \l_1g_\l
- f_\l .
$$

For the proof of Lemma~\ref{lem:main2}, we need a lower estimate of
$\left|H(v_\l )|_{\G \cap B_{\l ,\d }}\right|$. First of all, we
estimate from below the $L_2$--norm ${||H(v_\l )||}_2$ of $H(v_\l )$
on $\sss^1 = \rr / 2\pi \zz$. By \eqref{eqn:hl},
$$
H(v_\l )= \mbox{Im }\bar \mu h_\l .
$$

Note that $H(v_\l )$ is linear with respect to $v_\l $. Let $v_\l =
v_s + u_\l$ be the decomposition \eqref{eqn:deco} for $v_\l$. For
the singular \vf $v_s$  we have: $H(v_s) \equiv 0$. Hence,
$$
H(v_\l ) = H(u_\l ) = \mbox{Im }\bar \mu \tilde h_\l ,
$$
where $\tilde h_\l = be^{-i\t } + ce^{-3i\t }$.

Consider an arbitrary trigonometric polynomial $H$ on $\rr /2\pi \zz
$. If $H$ contains no complex conjugate monomials, that is, for any
entry $ae^{in\t } + be^{-in\t }$ at least one coefficient is $0$
(i.e. $ab = 0$), then
$$
{||\mbox{Im }H||}^2_2 = {||\mbox{Re }H||}^2_2 = \frac 1
2{||H||}^2_2.
$$
Indeed $H = \sum a_ne^{in\t }$ implies that $\mbox{Re}H = \frac 1
2(\sum (a_ne^{in\t } + \bar a_n e^{-n\t }))$, and consequently
${||\mbox{Re}H||}^2_2 = \dfrac 1 4 \sum (|a_n|^2 + |\bar a_n|^2) =
\frac 1 2{||H||}^2_2.$ The last conclusion holds because there are
no cancelations in the sum for $\mbox{Re }H$, by assumption.   The
same argument proves the statement for $\mbox{Im }H$.

\begin{Cor} \label{cor:1} For $v_\l $ which is $\ka $--distant from singular vector
fields ${||H (v_\l )||}_2 \ge \dfrac {|\mu |}{\sqrt 2}\ka $.
\end{Cor}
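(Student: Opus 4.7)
The plan is to chain together the identities already assembled in the paragraphs just above the corollary: first compute $\|H(v_\l)\|_2$ in terms of $\|\tilde h_\l\|_2$ using the ``no complex conjugate monomials'' lemma, then compute $\|\tilde h_\l\|_2$ in terms of $|b|^2+|c|^2$ by orthonormality of the exponential basis on $\mathbb{S}^1$, and finally invoke the $\kappa$-distance assumption together with the identity $\|r^{-2}u\|_2^2 = |b|^2+|c|^2$ recorded in Section 1.4.

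More concretely, I would begin by noting that since $H(v_s)\equiv 0$ and $H$ is linear in $v_\l$, one has
\[
H(v_\l) \;=\; H(u_\l) \;=\; \mathrm{Im}\,\bigl(\bar\mu\,\tilde h_\l\bigr),
\qquad \tilde h_\l(\theta) \;=\; b e^{-i\theta} + c e^{-3i\theta}.
\]
The Fourier spectrum of the trigonometric polynomial $\bar\mu\tilde h_\l$ sits at frequencies $-1$ and $-3$; in particular, no pair of complex conjugate monomials is present, so the hypothesis of the general identity just established applies and yields
\[
\|H(v_\l)\|_2^2 \;=\; \|\mathrm{Im}\,\bar\mu\,\tilde h_\l\|_2^2 \;=\; \tfrac12 \|\bar\mu\,\tilde h_\l\|_2^2 \;=\; \tfrac12 |\mu|^2 \,\|\tilde h_\l\|_2^2.
\]

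Next, by orthogonality of the exponentials $e^{-i\theta}$ and $e^{-3i\theta}$ in the normalized $L_2$ inner product on $\mathbb{S}^1$, we have $\|\tilde h_\l\|_2^2 = |b|^2 + |c|^2$. But the identity $\|r^{-2}u\|_2^2 = |b|^2+|c|^2$ recorded just after the decomposition \eqref{eqn:deco} shows that this is precisely $\|r^{-2}u\|_2^2$. By the definition of $\kappa$-distance from the singular locus, this quantity exceeds $\kappa^2$, so
\[
\|H(v_\l)\|_2^2 \;>\; \tfrac12\,|\mu|^2\,\kappa^2,
\]
which is the claimed inequality after taking square roots.

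There is really no obstacle here: the corollary is a bookkeeping consequence of the three ingredients already in place (the identity $H(v_\l)=\mathrm{Im}\,\bar\mu\tilde h_\l$, the halving identity for $\|\mathrm{Im}\,H\|_2^2$, and the definition of $\kappa$-distance). The only point that deserves a sentence of justification is the verification that $\bar\mu\,\tilde h_\l$ meets the ``no complex conjugate monomials'' hypothesis, which is immediate from the fact that $\tilde h_\l$ contains only the purely negative frequencies $-1$ and $-3$.
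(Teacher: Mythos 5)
Your proposal is correct and follows essentially the same route as the paper: the authors likewise combine the identity $H(v_\l)=\mathrm{Im}\,\bar\mu\tilde h_\l$, the halving identity $\|\mathrm{Im}\,H\|_2^2=\tfrac12\|H\|_2^2$ for polynomials without conjugate monomial pairs, and the definition of $\kappa$-distance via $\|r^{-2}u\|_2^2=|b|^2+|c|^2$ to get $\|H(v_\l)\|_2=\tfrac{1}{\sqrt2}|\mu|\sqrt{|b|^2+|c|^2}\ge\tfrac{|\mu|\kappa}{\sqrt2}$. Your explicit verification that $\bar\mu\tilde h_\l$ has only the frequencies $-1$ and $-3$ is a useful detail the paper leaves implicit.
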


Indeed, for equations, $\kappa $-distant from singular ones, we have
 ${||H(v_\l )||}_2  = \frac {1}{\sqrt 2}|\mu |\sqrt
{b^2 + c^2} \ge \frac {|\mu |\kappa }{\sqrt 2}$.

We got therefore a uniform lower bound for the $L_2$--norm of the
restriction $\dot r|_\G$. It is now clear that a similar bound would
exist for $\min \dot r |_{\G \cap B(\d, \l)}$. Indeed, zeros of
$\dot r|_\G$ are located at the singular points of $v_\l$, and all
the points of $B(\d, \l)$ are at least $\d$--distant from these
points. After $\min \dot r|_{\Gamma \cap B(\lambda , \delta )}$ is
estimated, it is  easy to prove that the lower boundary of the
curvilinear strip $\Pi_\b $ has no contacts with the field $v_\l $.
>From this it follows that the $\d $-tame limit cycles can not
intersect $\pi_\b $. The detailed proof of Lemma \ref{lem:main2} is
completed in Section \ref{sec:bounds}.

\subsection  {Construction of the larger domain $U$ in the
\gzt}\label{sub:gap}

Let
$$
S_\l = s_\l \setminus D_{\e(\l)}
$$
and
$$
\bold D = B(\d, \l) \cap \left\{ r \le \frac {-1}{g_\l(\theta)}
- \beta \right\}
$$
For any $\l \in \Lambda $, consider a $(\b \d)/32$-\nbd $D'$
of the domain $\bold D \subset \mathbb R^+ \times \mathbb S^1$
in $\mathbb C \times S^1$. We will choose $\e $ in such a way that
any orbit of $v_\l$ that starts in $U_\e \times \{ 0\} $, where $U =
U_\e $ is the $\e$--\nbd of $S_\l$, stays in $D'$ while $\theta$
ranges in $[0, 2 \pi]$. Let
$$
L = \max_{D'} \left|  \frac {\partial F_\l}{\partial w}\right|.
$$
Then, by the \gi,
\begin{equation} \label{eqn:eps1}
\e = \frac {\b \d }{32}e^{-2 \pi L}
\end{equation}
should be the desired one. Indeed, the largest $\d $-tame \lc keeps
in $\bold D$ by Lemma~\ref{lem:main2}. Hence, all the orbits that
start on $S_\l \times \{ 0\} $, keep in $D$ by definition of $S_\l
$. Then, for $\e $ from \eqref{eqn:eps1}, the orbits that start in
$U_\e \times \{ 0\} $ would not quit $D'$ for $ \theta \in [0, 2\pi
]$. Moreover, they will be $\dfrac{\b \d }{32}$--close to the
real orbits starting at $S_\l$. Hence, the \pmap for $v_\l$ is well
defined in $U_\e$, and
$$
\max_{U_\e} |f_\l| = \max_{U_\e} |P_\l - \mbox{id}| \le \d^{-1} +
\frac {\b \d }{32}.
$$

By Lemma \ref{lem:radc}, the orbits that start in $D_{2 \e(\l)}$
stay in $D_R \times \sss^1$ as $\theta$ ranges over $[0, 2\pi].$ So,
the set $U_\l = U_\e \cup D_{2\e(\l)}$ is a \nbd of $K_\l$ in which
the \pmap of $v_\l$ is holomorphic, and
\begin{equation}     \label{eqn:max}
\max_{U_\l}|f_\l | = M \le \d^{-1} + 1.
\end{equation}

\subsection{The final estimate}\label{sub:fin}

We can now estimate the geometric factor in the \gzt.   For this
we need to get an upper bound for $L$, then a lower bound for $\e $.

To estimate $L$, we first get a lower estimate for the denominator
in the relation \eqref{eqn:deriv} for $\dfrac {\partial F}{\partial
w}$. We have:
$$
|w|_{D'} \le \d^{-1} + \frac {\beta \d }{32} << 2\d^{-1}.
$$
Now, estimate $\min_{D'}|l + wg_\l |$. If $(w, \theta ) \in D'$ is
such that $|g_\l (\theta )| \le \frac \d 4 $, then $|l + wg_\l | \ge
1 - \frac 2 \d \cdot \frac \d 4 \ge \frac 1 2$. Suppose that $|g_\l
(\theta )|$ is now greater than $\frac \d 4$. Find a point $(w',
\theta ) \in \bold D$ with $|w' - w| < \frac {\b \d }{32}$. Then
$$
|1 + wg_\l | \ge |\frac {1}{g_\l } + w'||g_\l | - |g_\l ||w' - w|
\ge \b \frac \d 4 - 4\frac {\b \d }{32} \ge \frac {\b \d }{8}.
$$
Hence,$\min_{D'}|1 + wg_\l | \ge \frac {\b \d }{8}$.

Moreover, by Lemma~\ref{lem:delta}, $\l_1 \le 4\d^{-1}$. Hence, by
\eqref{eqn:deriv},
$$
L \le 6145\d^{-3}\beta^{-2}.
$$
We substitute this $L$ into \eqref{eqn:eps1} and get the expression
for $\e $ through $\delta $ and $\beta $. Note that the expression
of $\beta $ through $\delta , \sigma , \kappa $ is given in
Lemma~\ref{lem:main2}.

The intrinsic diameter $D \le 2\d^{-1}$. Hence,
$$
\frac {2D}{\e } \le 128\d^{-2}\b^{-1}e^{({10}^5-2)\d^{-3}\b^{-2}}.
$$
This provides a double exponential estimate for the geometric factor
$\exp \dfrac {2D}{\e }$.

Note that for $\d < 0.1$ and $\b < 0.1$, increasing the factor
${10}^5 - 2$ in the exponential by one will compensate well the
division by the first factor. Finally,
\begin{equation}   \label{eqn:geom1}
\frac{2D}{\e } \le e^{({10}^5-1)\d^{-3}\b^{-2}}.
\end{equation}
We can now estimate the Bernstein index of $f_\l$. The numerator in
\eqref{eqn:bern} is estimated in \eqref{eqn:max}. The denominator is
estimated in the First Main Lemma (Lemma \ref{lem:main1}). We
replace it by even smaller value:
$$
m = \max_{K_\l}|f_\l| \ge {10}^{-\frac {26}{\d }}\sigma .
$$
Finally, the Bernstein index of $f_\l$ is:
$$
B_{U_\l ,K_\l }(f_\l ) = \log \frac {M(\Lambda )}{m(\lambda )} \le
\log 2 - \log \d + \frac {26}{\d }\log 10 - \log \sigma .
$$
We see that this index, whose estimate took the main part of the
work, is in a sense negligible in comparison with the geometric
factor. Replacing of this index by $|\log \sigma |$ may be well
compensated through the increasing by $1$ the exponential
${10}^5 - 1$ in \eqref{eqn:geom1}.

Finally, by the \gzt we have:
$$
H(2, \d , \sigma , \kappa ) < |\log \sigma |
e^{e^{{10}^5\d^{-3}\b^{-2}}}.
$$
Substituting here the value of $\b $ from Lemma~\ref{lem:main2}
(which is not yet proved), we obtain Theorem \ref{thm:mt2}.

\section{Some lower bounds for trigonometric polynomials}\label{sec:bounds}

In this subsection we complete the proof of  Lemma \ref{lem:main2}.

\subsection{Homogeneous polynomials of degree three} \label{sub:hom}

\begin{Lem}\label{lem:trig} Consider a real homogeneous
trigonometric polynomial $H$ of degree 3, that is, a homogeneous
three--form on $\sin \theta, \cos \theta $ with real coefficients.
Let $\mathbb R_\a$ be the set of all real $\theta$ that are at least
$\a$--distant from the complex rots of $H$. Then
$$
\min_{\mathbb R_{\a}} |H| \ge \frac {\a^3}{24} \| H \|_2.
$$

\end{Lem}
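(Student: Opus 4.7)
My plan is to factor $H$ as a product of elementary sine-factors over $\mathbb{C}$ and then estimate both $|H(\theta)|$ from below and $\|H\|_2$ from above through the same factorization. Because $H(\theta) = P(\cos\theta, \sin\theta)$ for a real homogeneous cubic polynomial $P$, the only Fourier modes of $H$ are $e^{\pm i\theta}$ and $e^{\pm 3i\theta}$, so with $z = e^{i\theta}$ the function $z^{3}H(\theta)$ is a polynomial in $z^{2}$ of degree at most $3$. Factoring this polynomial over $\mathbb{C}$ and writing its roots as $e^{2i\psi_j}$ yields
\[
H(\theta) = C\prod_{j=1}^{3}\sin(\theta - \psi_j), \qquad \psi_j = a_j + ib_j \in \mathbb{C},
\]
in which the complex zeros of $H$ are precisely $\psi_j + k\pi$ for $j\in\{1,2,3\}$, $k\in\mathbb{Z}$; by reality of $H$, any non-real $\psi_j$ come in complex conjugate pairs. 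The degenerate case in which the leading Fourier coefficient vanishes (so $H$ has trigonometric degree one) gives fewer factors and a strictly stronger bound and is handled in parallel, so I focus on the generic case.

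For $\theta \in \mathbb{R}_\a$ I bound each factor from below via the identity $|\sin(\theta - \psi_j)|^2 = \sin^2(\theta - a_j) + \sinh^2 b_j$. Letting $d_j = \min_k|\theta - a_j - k\pi| \in [0, \pi/2]$, the distance from $\theta$ to the nearest complex zero of the $j$-th factor is $\sqrt{d_j^2 + b_j^2} \ge \a$. Jordan's inequality $\sin d_j \ge 2d_j/\pi$ together with $|\sinh b_j| \ge |b_j|$ immediately gives the naive estimate $|\sin(\theta - \psi_j)| \ge 2\a/\pi$; however, this is incompatible with the $L^2$ upper bound below, which will carry a factor $\prod \cosh(2b_j)$ that blows up with the $b_j$'s. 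The key refinement is the \emph{relative} estimate
\[
\frac{|\sin(\theta - \psi_j)|^2}{\cosh(2b_j)} \ge \frac{2\a^2}{\pi^2},
\]
obtained by case analysis on whether $d_j \ge \a/\sqrt{2}$ or $b_j \ge \a/\sqrt{2}$ (at least one must hold), together with elementary monotonicity of the function $f(s) = (v+s)/(1+2s)$ in $s$, where $v = \sin^2 d_j$ and $s = \sinh^2 b_j$, so that the ratio above equals $f(s)$ (and $f(s) \ge v$ when $v \le 1/2$, while $f(s) \ge 1/2$ when $v > 1/2$).

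For the companion upper bound on $\|H\|_2$, I rewrite $|\sin(\theta - \psi_j)|^2 = \tfrac{1}{2}(\cosh 2b_j - \cos 2(\theta - a_j))$ and expand the triple product. Upon integrating over $[0, 2\pi]$ every term with an odd number of cosine factors vanishes by orthogonality (the single-cosine terms trivially, and the triple-cosine term because all frequencies appearing in $\prod_j \cos 2(\theta - a_j)$ are nonzero multiples of $2\theta$), leaving
\[
\|H\|_2^2 \le \frac{5\pi}{8}\,|C|^2 \prod_{j=1}^{3}\cosh(2b_j).
\]
Combining the product of the three per-factor relative estimates with this $L^2$ upper bound cancels the $\prod\cosh(2b_j)$ factors and yields
\[
\frac{|H(\theta)|}{\|H\|_2} \ge \sqrt{\frac{8}{5\pi}}\left(\frac{\sqrt{2}\,\a}{\pi}\right)^{3} = \frac{8\,\a^{3}}{\pi^{7/2}\sqrt{5}} > \frac{\a^{3}}{24},
\]
as claimed. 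The main obstacle is the relative estimate in the middle paragraph: the naive per-factor bound $2\a/\pi$ alone is swallowed by the $\prod\cosh(2b_j)$ in the $L^2$ upper bound, and it is the cancellation provided by dividing each factor by $\sqrt{\cosh 2b_j}$ that drives the entire argument.
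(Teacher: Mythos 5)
Your proof is correct and follows the same basic strategy as the paper's -- factor $H$ into three factors $\sin(\theta-\psi_j)$, bound each factor from below on $\mathbb R_\alpha$ via Jordan's inequality, and compare with $\|H\|_2$ through the leading coefficient -- but your execution of the complex-root case is genuinely different and, in fact, more solid. The paper splits into the case of three real roots and the case of one real root plus a conjugate pair; in the latter it derives the same pointwise lower bound $|H|\ge |A|(2/\pi)^3\alpha^3$ via $\cosh a-\cos b\ge \tfrac12 a^2+c\,b^2$ and then implicitly reuses the bound $|A|\ge \|H\|_2/\sqrt{2\pi}$, which is obtained from $|H|\le|A|$ and is simply false when the imaginary part $\psi$ of the conjugate pair is large (there $\|H\|_2\sim \tfrac{|A|}{2\sqrt2}\cosh 2\psi\gg|A|$). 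Your device of dividing each factor by $\sqrt{\cosh 2b_j}$, proving the relative estimate $|\sin(\theta-\psi_j)|^2/\cosh 2b_j\ge 2\alpha^2/\pi^2$ by the monotonicity of $(v+s)/(1+2s)$, and then computing $\|H\|_2^2\le \tfrac{5\pi}{8}|C|^2\prod\cosh 2b_j$ exactly (the odd-cosine terms vanishing by orthogonality) is precisely what is needed to make the $\cosh$ factors cancel; this closes the lacuna in the paper's Case 2 and handles both cases uniformly. I checked your constants: $8/(\sqrt5\,\pi^{7/2})\approx 0.065>1/24$, and the bound only improves under the paper's normalization $\|f\|_2^2=\tfrac1{2\pi}\int|f|^2$, so the normalization ambiguity is harmless.

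One small caveat: your per-factor relative estimate in the sub-case $|b_j|\ge\alpha/\sqrt2$ reduces to $\tfrac{\alpha^2/2}{1+\alpha^2}\ge\tfrac{2\alpha^2}{\pi^2}$, which requires $\alpha^2\le\pi^2/4-1$, i.e.\ $\alpha\lesssim 1.21$; and the sub-case $v_j>1/2$ requires $\alpha\le\pi/2$. Since a real cubic form always has a real root series, $\mathbb R_\alpha=\emptyset$ for $\alpha>\pi/2$, but for $\alpha\in(1.21,\pi/2]$ your per-factor bound can narrowly fail (e.g.\ $d_j=0$, $|b_j|=\alpha$), and one would have to let the real-root factor absorb the loss. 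This is cosmetic here -- the lemma is only ever invoked with $\alpha=\delta^4/100<10^{-7}$ -- but you should state the restriction on $\alpha$ explicitly or add the one extra line dealing with the window near $\pi/2$.
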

\begin{proof} The polynomial $H$ has three series of roots counted
with multiplicities: $\theta_j + \pi n, n \in \mathbb Z, j = 1,2,3.$
Hence, for some real $A$,
$$
H = A \prod_1^3 \sin (\theta - \theta_j).
$$

\noindent{\it Case} 1. All $\theta_j$ are real. Then
\begin{equation} \label{eqn:minh}
\min_{\mathbb R_\a }|H| \ge |A| \left( \frac {2}{\pi}\right)^3 \a^3.
\end{equation}

On the other hand,
$$
|A| \ge \frac {\|H\|_2}{\sqrt{2 \pi }}.
$$
The inequality:   $ 2^{2.5}/\pi^{3.5} \ge 1/24$ implies the lemma
in Case 1.

\medskip

\noindent{\it Case} 2. One root $\theta_1 $ is real, two others are complex:
$\theta_{2,3} = \varphi \pm i \psi, \psi \not = 0.$ Then
$$
H = A\prod_1^3 \sin (\theta - \theta_j) = \frac A2 \sin (\theta -
\theta_1) (\mbox{ch}2 \psi - \cos 2 (\theta - \varphi)).
$$

For any $a \in \mathbb R, \ |b| \le \pi$, the following inequality
holds:
$$
\mbox{ch}\, a - \cos b \ge \frac 12 a^2 + \left( \frac 2{\pi}\right)^2
b^2.
$$
By assumption, $\psi^2 + {(\theta - \ph )}^2 \ge \a^2$. Hence, once
again we have \eqref{eqn:minh}. This proves the lemma in case 2.
\end{proof}

\subsection{Lower bounds for the distance to the roots} \label{sub:roots}

If two points of the disk $r \le \d^{-1}$ are at least $\d
$--distant in Cartesian coordinates, then they are at least
$\d^2$--distant in the polar coordinates. If two points, one in the
disk $r \le \d^{-1}$ in $\rr^2$, another in $\cc^2$, are at least
$\d $-distant in Cartesian coordinates, $ \d < 0.1$, then they are
at least $\frac 2 3\d^2$-distant in complex polar coordinates.

\begin{Prop} \label{prop:1}   Suppose that the point $ (\theta_0, r), \ r \le
\d^{-1}$ and $\theta_0 \in [0,2\pi ]$ is at least $\frac 2 3
\d^2$--distant from the singular points of the system
\eqref{eqn:rad} with  complexified $r$ and $\theta $ in the metric
$ds^2 = {|dr|}^2 + {|d\theta|}^2$, and
\begin{equation} \label{eqn:est}
\left| r + \frac {1}{g_\l (\theta_0)} \right|< \frac {\d^2}{2}.
\end{equation}
Then
\begin{equation} \label{eqn:alpha1}
d(\theta_0, \{H(v_\l) = 0\} )  \ge \frac{\d^4}{100}.
\end{equation}
\end{Prop}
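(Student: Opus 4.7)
\medskip
\noindent\textbf{Proof plan for Proposition~\ref{prop:1}.}

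The plan is to argue by contraposition. First I would characterize the singular points of the complexified system \eqref{eqn:rad}. Setting the right-hand sides equal to zero and excluding $r=0$ (which forces the impossible $\dot\theta=1=0$), we find that a point $(\theta^*,r^*) \in \cc \times \cc$ is singular if and only if $r^*=-1/g_\l(\theta^*)$ and $H(v_\l)(\theta^*)=\l_1 g_\l(\theta^*)-f_\l(\theta^*)=0$. Thus each complex zero $\theta^*$ of $H(v_\l)$ with $g_\l(\theta^*)\ne 0$ yields a singular point $p^*=(\theta^*,-1/g_\l(\theta^*))$, and conversely.

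Suppose the conclusion \eqref{eqn:alpha1} fails, so that there exists a (possibly complex) zero $\theta^*$ of $H(v_\l)$ with $|\theta_0-\theta^*|<\d^4/100$. I would then show that the distance from $(\theta_0,r)$ to the corresponding singular point $p^*$, measured in $ds^2=|dr|^2+|d\theta|^2$, is strictly less than $\tfrac{2}{3}\d^2$, contradicting the hypothesis. The distance in question is
\[
\sqrt{|\theta_0-\theta^*|^2+\bigl|r+1/g_\l(\theta^*)\bigr|^2},
\]
and the first term is negligible (at most $\d^4/100$), so the crux is the second term.

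To bound $|r+1/g_\l(\theta^*)|$, I would first extract a lower bound on $|g_\l(\theta_0)|$: from \eqref{eqn:est} and $|r|\le\d^{-1}$ we get $|1/g_\l(\theta_0)|\le\d^{-1}+\d^2/2$, hence $|g_\l(\theta_0)|\ge\d/(1+\d^3/2)\ge 0.99\d$ for $\d<0.1$. Next, since $g_\l$ extends to an entire function with $|g_\l'(\theta)|$ uniformly bounded (by a small absolute constant $C$, say $C\le 7$) on the segment from $\theta_0$ to $\theta^*$—this segment has imaginary part at most $\d^4/100$, so the factors $e^{\pm i\theta},e^{\pm 3i\theta}$ in $g_\l'$ remain within $1+o(1)$ of their real-argument moduli—I obtain $|g_\l(\theta^*)-g_\l(\theta_0)|\le C\d^4/100$. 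Consequently $|g_\l(\theta^*)|\ge 0.98\d$, and
\[
\left|\frac{1}{g_\l(\theta_0)}-\frac{1}{g_\l(\theta^*)}\right|\le\frac{C\d^4/100}{|g_\l(\theta_0)||g_\l(\theta^*)|}\le \frac{\d^2}{10}.
\]
Combining with \eqref{eqn:est} via the triangle inequality gives $|r+1/g_\l(\theta^*)|<\d^2/2+\d^2/10<0.6\d^2$, whence the total distance is below $\sqrt{(\d^4/100)^2+(0.6\d^2)^2}<\tfrac{2}{3}\d^2$, the desired contradiction.

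The main obstacle is purely numerical bookkeeping: I must ensure that the chosen constants ($\tfrac{2}{3}$, $\d^2/2$ in \eqref{eqn:est}, $1/100$ in \eqref{eqn:alpha1}, and the bound on $\|g_\l'\|$ on a thin complex strip) fit together so that the final distance strictly beats $\tfrac{2}{3}\d^2$. Once the derivative of the analytic continuation of $g_\l$ is controlled on a neighborhood of the real axis of thickness $\d^4/100$—which is automatic since $g_\l$ is a fixed trigonometric polynomial of degree $3$ with coefficients bounded by a small constant—the rest is elementary estimation.
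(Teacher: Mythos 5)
Your proposal is correct and follows essentially the same route as the paper: contraposition, the lower bound $|g_\l(\theta_0)|\ge 0.99\d$ from \eqref{eqn:est}, the bound $|g_\l'|\le 7$ on the short complex segment, and a triangle-inequality comparison of $(\theta_0,r)$ with the singular point over the nearby zero of $H(v_\l)$. The only cosmetic differences are that you make the characterization of the complexified singular points explicit and control $|1/g_\l(\theta_0)-1/g_\l(\theta^*)|$ by the identity $1/a-1/b=(b-a)/(ab)$ where the paper invokes the Mean Value Theorem for $(1/g_\l)'$.
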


\begin{proof} By contraposition, assume that the converse to \eqref{eqn:alpha1} is true.
Then there exists $\t_1 $, zero of $H(v_\l )$ \st

$$
|\t_0 - \t_1|< \a := \frac {\d^4}{100}.
$$
It may happen that $\t_1$ is non--real. Take two extra points: $b =
\left( \t_0 , -\dfrac {1}{g_\l (\t_0)}\right) $ and $c = \left(
\t_1, -\dfrac {1}{g_\l (\t_1)}\right) $; and let $a = (\t_0 ,r)$.
Then, by \eqref{eqn:est},
$$
|b - a| \le \frac {\d^2}{2}.
$$
Let $L = \displaystyle\max_{[\t_0, \t_1]}\left|{\left(\dfrac
{1}{g_\l }\right)}'\right|$. By assumption, $|\t_0 -\t_1| \le \a $.
Then, by the Mean Value Theorem,
$$
|b - c| \le \a \sqrt{L^2+1}.
$$
We now estimate $L$ from above. Recall that $g_\l = \mbox{Im }h_\l ,
\ h_\l = Ae^{i\ph } +Be^{-i\ph } + Ce^{-3i\ph }, \ |A| \le 1, |B|
\le 2, |C| \le 1$. By \eqref{eqn:est} and assumption $r \le
\d^{-1}$, we have:
$$
|g_\l (\t_0)| \ge \dfrac {1}{\d^{-1}+\dfrac{\d^2}{2}} \ge \d - \frac
{\d^4}{2} \ge 0.99\d .
$$
Now, by \eqref{eqn:hl}
$$
g_\l =\mbox{Im }h_\l , \qquad |g'_\l | \le |h'_\l | \le |A|e^\a +
|B|e^\a + 3|C|e^{3\a } \le 7.
$$
Then $L < 8\d^{-2}$. Hence,   $\a \sqrt{L^2+1} < \dfrac {\d^2}{6}$.
Therefore, $|a - c| \le |a - b| + |b - c| < \frac 2 3\d^2$, a
contradiction.
\end{proof}


\subsection{Proof of Lemma~\ref{lem:main2}}

In order to prove that no limit cycle can cross $\Pi_\b $, let us
first check that the lower bound $\G^-$ of $\Pi_\b $ is a curve
without contacts with the vector field \eqref{eqn:rad}. This lower
bound has the form:
\begin{equation}    \label{eqn:arc}
\G^-: r = -\frac {1}{g_\l (\theta )} - \b , \ (r,\theta ) \in B(\l ,
\d ),
\end{equation}
Denote by $S$ the minimal slope of the field \eqref{eqn:rad} on
$\G^- $:
$$
S = \min_{\G^- }\left| \frac {dr}{d\theta }\right| = \min_{\G^-
}\left| r\frac {\l_1+rf_\l }{1+rg_\l }\right| .
$$
On $\Gamma^-$ we have:
$$
|\l_1 + rf_\l | = \left| \frac {1}{g_\l }(H(v_\l ) - \beta g_\l f_\l
)\right| \ge \frac 1 4 (|H(v_\l )| - 16\b ).
$$
The points of $B(\l , \d )$ are at least $\d $--distant from the
singular points of system \eqref{eqn:rad}. By the remark at the
beginning of Subsection~\ref{sub:roots}, points of $\G^- $ satisfy
assumptions of Proposition~\ref{prop:1}. Hence, for any $\theta $
\st $(r,\theta ) \in \G^- $ for some $r$, we have
\eqref{eqn:alpha1}. Now, taking $\a = \dfrac{\d^4}{100}$ in
Lemma~\ref{lem:trig}, we conclude that
$$
\min_{\G^- }|H(v_\l )| \ge \frac {\a^3}{24}{||H(v_\l )||}_2 = \frac
{\d^{12}}{{10}^6\cdot 24}{||H(v_\l )||}_2.
$$
By Corollary~\ref{cor:1} we get
$$
\min_{\G^- }|H(v_\l )|\ge \frac {\d^{12}}{{10}^6\cdot 24\sqrt
2}\kappa := \kappa' .
$$
Hence
$$
\min_{\G^- }|\l_1 + rf_\l | \ge \frac {\kappa' }{4} - 4\b .
$$
Moreover, on $\Gamma^-$
$$ |1 + rg_\l | = -\b g_\l \le 4\b . $$
At last, $r|\G^- \ge \dfrac 1 5$. Hence
$$
S \ge \frac {\kappa' }{80\b } - \frac 1 5.
$$

Denote by $\pi \G^- $ the projection of $\G^- $ to $r = 0$ along the
$r$--axis; $\pi \G^- \subset \{ -g^{-1}_\l \le \d^{-1} + \b \}
$. We estimate the maximal slope of $\G^- $. It is equal to
$$
s = \displaystyle \max_{\pi \G^- } \left| {\left(\frac {1}{g_\l
}\right) }'\right| \le \frac {6}{\min_{\pi \G^- }{|g_\l |}^2} \le
7\d^{-2}.
$$
The inequality $S > s$ follows from the definition of $\b $ in
Lemma~\ref{lem:main2}.

We now prove that no $\d $--tame limit cycle that surrounds zero can
cross $\Pi_\b $. On the contrary, let a cycle $\g $ contain a point
$q\in \Pi_\b $. As $\g $ surrounds $0$, it must enter and quit
$\Pi_\b $. The connected component $\Pi^q$ of $\Pi_\b $ that
contains $q$ is bounded by an arc $\g_{\b ,q}$ of the curve
$\eqref{eqn:arc}$ and by the part of $\p B(\l ,\d )$. As $S > s$,
the cycle can enter $\Pi^q$ through $\g_{\b ,q}$   (in positive or
negative time) but cannot quit $\Pi^q$ through $\g_{\b ,q}$. Hence,
it quits $\Pi^q$ through $\p B(\l ,\d )$. This contradicts to the
assumption that $\g $ is $\d $--tame and proves
Lemma~\ref{lem:main2}.

\vskip 1pc

\section{Acknowledgment}

We are grateful to Alexey Fishkin who read several versions of the
manuscript and made many fruitful comments.

\vskip 1pc

\section{The appendix}

In this appendix we provide the values of the $\a$'s, $\b$'s and $\g$'s of Lemma \ref{lem:poin}.

We shall compute the Poincar\'{e} map $P_{\la}$ associated to the
differential equation \eqref{eqn:radc} in complex polar coordinates
$(w,\t)$. Let $P_{\la}:\{\t=0\} \to \{\t=0\}$ be the Poincar\'e map
defined by the flow of system \eqref{eqn:radc}; i.e. $P_{\la}$ is
the $2\pi$--time Poincar\'e map that brings an initial value of any
solution $r(\t,x)$ of system (\ref{eqn:radc}) with initial condition
$r(0,x)= x$ on the half--axis $\{\t=0\}$ to the value of the same
solution at $\t= 2\pi$, whenever defined. We know that the limit
cycles surrounding the origin of system (\ref{eqn:num1}) correspond
to real isolated zeros of the {\it displacement function}
$P_{\la}(x)-x$.

The power series expansion for the displacement function $P_{\la}(x)
- x$ associated to a quadratic system (\ref{eqn:num1}) in a
neighborhood of the origin is found in the following classical way.
The right hand side of equation (\ref{eqn:radc}) may be decomposed
in a power series in $r$ with the $\theta$-dependent coefficients:

\be \frac{dw}{d\t}=\sum_{i=1}^{\infty} R_i(\t)w^i \, , \label{3a}
\ee where $R_1= \la_1$, \be
R_i(\t)=(-1)^i[f_{\l}(\t)-\la_1g_{\l}(\t)]g_{\l}(\t)^{i-2} \quad
\mbox{for} \quad i=2,3,\ldots \label{3b} \ee

The modification of the Bautin result in \cite{Zo} implies that the
coefficients of the displacement map
\begin{equation}\label{popo}
P_{\la}(x)-x= \sum_{j=1}^{\infty} a_j(\la) \, x^j,
\end{equation}
belong to the ideal generated by  $g_j(\la), j = 1,..., 4$, where
$g_j$ are the same as in Theorem \ref{thm:t1}.  For $\la_1=0$, the
coefficients $a_j(\la)$ are polynomial.

We use the algorithm due to Bautin for computing explicitly
$P_{\la}(x)$ in powers of $x$ up to order $7$, see also \cite{CJ}.
We do the computations for the case $\la_1=0$; otherwise
$v_7(\t,\la)$, which is necessary for computing $v_7(2\pi,\la)$ and
consequently $P_{\la}(x)$ in powers of $x$ up to order $7$, would
need more than thousand pages. For doing these computations we have
used the algebraic manipulator mathematica.

We know that the series of (\ref{3a}) converges if $w$ is small
enough, and that the solution $w(\t)$ of differential equation
(\ref{3a}) satisfying the initial condition $w(0)= x$ can be
expanded as
\be w(\t,\la)=\sum_{i=1}^{\infty}  v_i(\t,\la)x^i \, ,
\label{3c} \ee where the $v_i(\t,\la)$'s satisfy the conditions \be
v_1(0,\la)=1 \quad \mbox{and} \quad v_i(0,\la)=0 \quad \mbox{for}
\quad i=2,3,\ldots\, . \label{3d} \ee

Substituting (\ref{3c}) in (\ref{3a}), taking $\la_1=0$, and looking
for the coefficients of the powers of $x$, we obtain the equations
for determining all the $v_i$'s:
\beq
\frac{dv_1}{d\t} &=& 0 \, , \\
\frac{dv_2}{d\t} &=& v_1^2R_2 \, , \\
\frac{dv_3}{d\t} &=& 2v_1v_2R_2+v_1^3R_3 \, , \\
\frac{dv_4}{d\t} &=& (2v_1v_3+v_2^2)R_2+3v_1^2v_2R_3+v_1^4R_4 \, , \\
\frac{dv_5}{d\t} &=&
2(v_1v_4+v_2v_3)R_2+3v_1(v_1v_3+v_2^2)R_3+4v_1^3v_2R_4+v_1^5R_5 \,
,\\
\frac{dv_6}{d\t} &=& (2v_1v_5+2v_2v_4+v_3^2)R_2+(3v_1^2v_4+6v_1v_2v_3+v_2^3)R_3+ \\
 & & 2v_1^2(2v_1v_3+3v_2^2)R_4+5v_1^4v_2R_5+v_1^5R_6 \, ,\\
\frac{dv_7}{d\t} &=& 2(v_1v_6+v_2v_5+v_3v_4)R_2+\\
 & & 3(v_1^2v_5+2v_1v_2v_4+v_1v_3^2+v_2^2v_3)R_3+\\
 & & 4v_1(v_1^2v_4+3v_1v_2v_3+v_2^3)R_4+\\
 & & 5v_1^3(2v_2^2+v_1v_3)R_5+6v_1^5v_2R_6+v_1^7R_7 \, .
\eeq

All these differential equations are solved recursively computing
an integral with respect to $\t$ and taking into account the
initial conditions (\ref{3d}). Thus, we get that $v_1(\t,\la)=1$,
and
\[
\begin{array}{ll}
v_2(\t,\la)=& \dfrac13 (-3 a_2 + 3 b_2 + c_2) + a_2 \cos \t - b_2 \cos \t - \dfrac13  c_2 \cos(3\t) +\\
& a_1 \sin \t + b_1 \sin \t + \dfrac13  c_1 \sin(3\t)\, ,
\end{array}
\]
here we denote $A= a_1+ i a_2$, $B= b_1+i b_2$ and $C= c_1+i c_2$.
The expressions for $v_i(\t,\la)$ for $i=3,4,5,6,7$ need
approximately $1/2$, $2$, $7$, $18$ and $42$ pages, respectively.
Once we know $v_i(\t,\la)$ for $i=3,4,5,6,7$, evaluating
$v_i(2\pi,\la)$ we get the displacement function
\begin{equation}\label{pmap}
P_{\la}(x)= w(2\pi,\la)= \sum_{i=1}^{\infty} v_j(2\pi,\la)x^j =
\sum_{j=1}^{\infty} a_j(\la) \, x^j \, ,
\end{equation}
with the explicit formulas for the polynomials $a_j(\lambda), j =
1,..., 7.$ After that we decompose these polynomials in the ideal
with generators $g_j, j = 1,...,4$. This is done with the use of the
manipulator mathematica again. The results of these computations
presented below imply Lemma \ref{lem:poin}. The coeficients of the
decompositions mentioned above are the following:
\[
\begin{array}{ll}
\a_0=& -2 \pi,\\
 & \\
\b_0=& -\dfrac{2\pi}{3},\\
 & \\
\b_1=& -\dfrac{2\pi}{9} (9 a_2^2-9 b_2 a_2-6
   c_2 a_2-27 \pi  b_1 a_2+27
   b_2^2+21 c_2^2+18 b_1^2+\\
 & \qquad 20 c_1^2+6
   b_2 c_2-27 b_2 \pi  a_1-9 a_1
   b_1),\\
 & \\
\g_0=& -\dfrac{5 \pi }{4},\\
 & \\
\g_1=&  -\dfrac{\pi}{72} (300 a_2^2-558 b_2
   a_2-240 c_2 a_2-384 \pi  b_1 a_2+528 b_2^2+204
   c_2^2-\\
& \quad \quad 36 a_1^2+288 b_1^2+188 c_1^2+168
   b_2 c_2-384 b_2 \pi  a_1-18 a_1 b_1+\\
& \quad \quad 48 a_1 c_1+24 b_1 c_1),\\
 & \\
\g_2=& -\dfrac{\pi}{1080}  (2160 a_2^4-360 b_2 a_2^3-1296
   c_2 a_2^3-25920 \pi  b_1 a_2^3+\\
& \qquad \quad 17100
   b_2^2 a_2^2+27864 c_2^2 a_2^2+21600 \pi
   ^2 b_1^2 a_2^2+10260 b_1^2 a_2^2+\\
& \qquad \quad 24648
   c_1^2 a_2^2+7236 b_2 c_2
   a_2^2-25920 b_2 \pi  a_1 a_2^2-8280 a_1
   b_1 a_2^2+\\
& \qquad \quad 34560 b_2 \pi  b_1
   a_2^2+17280 c_2 \pi  b_1 a_2^2-4752 a_1
   c_1 a_2^2-\\
& \qquad \quad 1740 b_1 c_1
   a_2^2-34200 b_2^3 a_2-19824 c_2^3
   a_2-34560 \pi  b_1^3 a_2-\\
& \qquad \quad 37368 b_2
   c_2^2 a_2+4680 b_2 a_1^2 a_2-144
   c_2 a_1^2 a_2-33480 b_2 b_1^2
   a_2-\\
& \qquad \quad 9954 c_2 b_1^2 a_2+17280 \pi
   a_1 b_1^2 a_2-38472 b_2 c_1^2
   a_2-\\
& \qquad \quad 20976 c_2 c_1^2 a_2-38400 \pi
   b_1 c_1^2 a_2-22806 b_2^2 c_2
   a_2+\\
& \qquad \quad 34560 b_2^2 \pi  a_1 a_2+17280
   b_2 c_2 \pi  a_1 a_2+5040 b_2
   a_1 b_1 a_2+\\
& \qquad \quad 8280 c_2 a_1
   b_1 a_2+43200 b_2 \pi ^2 a_1 b_1
   a_2-60480 b_2^2 \pi  b_1 a_2-\\
& \qquad \quad 41280
   c_2^2 \pi  b_1 a_2-17280 b_2 c_2
   \pi  b_1 a_2-7608
   b_2 a_1 c_1 a_2-\\
& \qquad \quad 1440 c_2
   a_1 c_1 a_2-14628 b_2 b_1
   c_1 a_2-2112 c_2 b_1 c_1
   a_2+36900 b_2^4+\\
& \qquad \quad 13040 c_2^4+14580
   b_1^4+11200 c_1^4+16152 b_2 c_2^3-9720 a_1
   b_1^3+\\
& \qquad \quad 2640 a_1 c_1^3+1320 b_1
   c_1^3+70758 b_2^2 c_2^2-4140 b_2^2
   a_1^2+648 c_2^2 a_1^2+\\
& \qquad \quad 2124 b_2
   c_2 a_1^2+21600 b_2^2 \pi ^2
   a_1^2+50760 b_2^2 b_1^2+42498 c_2^2
   b_1^2-\\
& \qquad \quad 1620 a_1^2 b_1^2-25596 b_2
   c_2 b_1^2-34560 b_2 \pi  a_1 b_1^2+67074
   b_2^2 c_1^2+\\
& \qquad \quad 24000 c_2^2 c_1^2+120
   a_1^2 c_1^2+41670 b_1^2 c_1^2+15288
   b_2 c_2 c_1^2-\\
& \qquad \quad 38400 b_2 \pi  a_1
   c_1^2-17400 a_1 b_1 c_1^2+30996
   b_2^3 c_2-60480 b_2^3 \pi  a_1-\\
& \qquad \quad 41280 b_2
   c_2^2 \pi  a_1-17280 b_2^2 c_2 \pi
   a_1+1080 a_1^3 b_1+17280 b_2 \pi  a_1^2
   b_1-\\
& \qquad \quad 11880 b_2^2 a_1 b_1-19080 c_2^2 a_1
   b_1+540 b_2 c_2 a_1 b_1-720 a_1^3
   c_1-\\
& \qquad \quad 11220 b_1^3 c_1+3690 a_1 b_1^2 c_1+1950 b_2^2
   a_1 c_1+2448 c_2^2 a_1 c_1+\\
& \qquad \quad 1344
   b_2 c_2 a_1 c_1+44028 b_2^2
   b_1 c_1+1224 c_2^2 b_1 c_1+1500
   a_1^2 b_1 c_1+\\
& \qquad \quad 1368 b_2 c_2
   b_1 c_1).
\end{array}
\]

\end{document}